\newcommand{\be}{\begin{equation}}
\newcommand{\ee}{\end{equation}}
\newcommand{\beq}{\begin{eqnarray}}
\newcommand{\eeq}{\end{eqnarray}}
\newtheorem*{theoremA}{Theorem A}
\def\begeq{\begin{equation}}
\def\endeq{\end{equation}}
\def\odot{\setbox0=\hbox{$\bigcirc$}\relax \mathbin {\hbox
to0pt{\raise.5pt\hbox to\wd0{\hfil $\wedge$\hfil}\hss}\box0 }}
\numberwithin{equation} {section}
\numberwithin{equation}{section}
\newtheorem{theorem}{\bf Theorem}[section]
\newtheorem{proposition}[theorem]{\bf Proposition}
\newtheorem{lemma}[theorem]{\bf Lemma}
\begin{document}
\title[Capillary Orlicz-Minkowski flow]
 {Capillary Orlicz-Minkowski flow in the upper half-space}\thanks{\it {The research is partially supported by NSFC (No. 12261105).}}

\thanks{Keywords:
capillary hypersurface, capillary Orlicz-Minkowski problem, capillary Gauss curvature flow}

\author{
Guanghan Li,~~Chenyang Liu*}
\address{
School of Mathematics and Statistics, Wuhan University, Wuhan 430072, China
}

\email{ghli@whu.edu.cn,liuchy@whu.edu.cn}

\thanks{$\ast$ Corresponding author}

\date{}
\maketitle
\begin{abstract}
In this paper, we study the long-time existence and asymptotic behavior of an anisotropic capillary Gauss curvature flow. By studying this flow and proving its convergence to a stationary solution,
we establish a new existence result for the capillary Orlicz-Minkowski problem without the evenness assumption, and provide a flow approach to the existence of smooth solutions.
\end{abstract}

\section{Introduction} \label{S1}

The classical Minkowski problem is a fundamental problem in the Brunn-Minkowski theory of convex geometry. Specifically, it asks whether there exists a closed convex hypersurface $\Sigma$ in $\mathbb{R}^{n+1}$
whose Gauss curvature is prescribed to be a given positive function $f$ on $\mathbb{S}^{n}$, that is
\begin{equation*}
K(\nu^{-1}(\xi))=f(\xi),\quad \forall\xi\in\mathbb{S}^{n},
\end{equation*}
where $\nu:\Sigma\rightarrow\mathbb{S}^{n}$ is the Gauss map of $\Sigma$ and $\nu^{-1}:\Sigma\rightarrow\mathbb{S}^{n}$ is the inverse of Gauss map. This problem was completely solved through the work of
Minkowski \cite{Min97,Min03}, Aleksandrov \cite{ADA56}, Lewy \cite{HL38}, Nirenberg \cite{LN53}, Pogorelov \cite{Pogo52,Pogo78}, Cheng-Yau \cite{C-Y76}, and many other mathematicians.
The Minkowski problem can be formulated analytically as the study of the Monge-Amp$\grave{\mathrm{e}}$re equation
\begin{equation*}
\det(\nabla^{2}h+h\sigma)=f^{-1},
\end{equation*}
where $h$ is the support function of convex set, $\sigma$ is the standard metric on $\mathbb{S}^{n}$, $\nabla^{2}$ is the Hessian operator and $f$ is a prescribed positive function. Hence, the study of such geometric problems is of significant
interest to the theory of fully nonlinear PDEs as well.

As a generalization of the classical Minkowski problem, the $L_p$-Minkowski problem was introduced by Lutwak \cite{Lut93} in the 1990s, building on Firey's concept of $p$-sums \cite{Fir62}.
The $L_p$-Minkowski problem holds an important position in convex geometry and geometric PDEs, which includes the logarithmic Minkowski problem and the centroaffine Minkowski problem.
For results on the $L_p$-Minkowski problem, one can refer to \cite{BBCY19,BBC20,BLYZ13,BIS19,CLZ17, CW06, HL13,JLW15,LYZ04,Z15}, and the references therein.

Building on the classical Minkowski problem and $L_p$-Minkowski problems, there has been significant progress in generalizing the related theory to capillary hypersurfaces, which we will detail in the following sections.

Let $\{E_{i}\}_{i=1}^{n+1}$ denote the standard orthonormal basis of $\mathbb{R}^{n+1}$, and define the upper Euclidean half-space as
\begin{equation*}
\mathbb{R}_{+}^{n+1}=\{X\in\mathbb{R}^{n+1}~|~\langle X,E_{n+1}\rangle>0\}.
\end{equation*}
We call a hypersurface $\Sigma$ in $\overline{\mathbb{R}_{+}^{n+1}}$ with boundary $\partial\Sigma\subset\partial\mathbb{R}_{+}^{n+1}$ capillary if it intersects with $\partial\Sigma\subset\partial\mathbb{R}_{+}^{n+1}$
at a constant contact angle $\theta\in(0,\pi)$. Let $\nu$ be the unit outword normal of $\Sigma$ in $\overline{\mathbb{R}_{+}^{n+1}}$, the contact angle $\theta$ is defined by
\begin{equation*}
\cos(\pi-\theta)=\langle \nu,e\rangle,
\end{equation*}
where $e:=-E_{n+1}$. If $\Sigma$ is a smooth convex capillary hypersurface in $\mathbb{R}_{+}^{n+1}$, then the domain $\widehat{\Sigma}$ bounded by $\Sigma$ and $\partial\mathbb{R}_{+}^{n+1}$ is called
a capillary convex body, and we denote by $\mathcal{K}_{\theta}$ the family of all capillary convex bodies in $\overline{\mathbb{R}_{+}^{n+1}}$. The capillary spherical cap $\mathcal{C}_{\theta}$ of unit radius is defined as
\begin{equation*}
\mathcal{C}_{\theta}=\{\xi\in\overline{\mathbb{R}_{+}^{n+1}}:|\xi-\cos\theta e|=1\}.
\end{equation*}
The capillary Gauss map $\tilde{\nu}:\Sigma\rightarrow\mathcal{C}_{\theta}$ is defined by
\begin{equation*}
\tilde{\nu}:=\nu(X)+\cos\theta e,
\end{equation*}
which is a diffeomorphism. Put $\ell(\xi)=\sin^{2}\theta+\cos\theta\langle \xi,e\rangle$, then one can check that $\ell$ gives the capillary support function of $\mathcal{C}_{\theta}$ (see \eqref{def-capillary ell} below).

Mei, Wang, and Weng \cite{MWW} pioneered the capillary Minkowski problem, defined as follows: given a positive smooth function $f$ on $\mathcal{C}_{\theta}$,
find a capillary convex body $\widehat{\Sigma} \in \mathcal{K}_{\theta}$ such that its Gauss curvature satisfies
\begin{equation*}
K(\tilde{\nu}^{-1}(\xi))=f(\xi),\quad \forall\xi\in\mathcal{C}_{\theta}.
\end{equation*}
Using the continuity method, they established the existence of smooth solutions to the capillary Minkowski problem for contact angles $\theta \in (0, \frac{\pi}{2}]$. By defining the capillary area measure of $\Sigma$ as
\begin{equation*}
m_{\theta}(\eta):=\int_{\eta} \frac{\sin^{2}\theta+\cos\theta\langle \xi,e\rangle}{K(\tilde{\nu}^{-1}(\xi))}d\xi
\end{equation*}
for all Borel sets $\eta\subset\mathcal{C}_{\theta}$, where $\tilde{\nu}^{-1}:\mathcal{C}_{\theta}\rightarrow\Sigma$ is the inverse capillary Gauss map and $d\xi$ is the area element of $\mathcal{C}_{\theta}$, the above capillary Minkowski problem is formulated to find a convex capillary hypersurface with contact angle $\theta$ such that its induced capillary area measure is equal to a given Borel measure on $C_{\theta}$.

Let $\xi=(\xi_{1},\cdot\cdot\cdot,\xi_{n},\xi_{n+1})$,
we say that a smooth function $f:\mathcal{C}_{\theta}\rightarrow\mathbb{R}$ is even if
\begin{equation*}
f(\xi_{1},\cdot\cdot\cdot,\xi_{n},\xi_{n+1})=f(-\xi_{1},\cdot\cdot\cdot,-\xi_{n},\xi_{n+1}),\quad\forall\xi\in\mathcal{C}_{\theta}.
\end{equation*}
A strictly convex capillary hypersurface is called even if its support function is even.

Research on the capillary $L_p$ Minkowski problem with $\theta \in (0, \frac{\pi}{2})$ has established existence results recently. Mei, Wang, and Weng first solved the case $p=1$ in \cite{MWW} (i.e. the capillary Minkowski problem).
They later extended this to the even case (i.e. when $f$ is even) for $1 < p < n+1$, and the non-even case $p \geq n+1$ in \cite{MWWlp}. Hu and Ivaki \cite{HI25} resolved the even case in the range $-n-1 < p < 1$ via an iterative scheme.
Subsequently, Hu, Hu and Ivaki \cite{HHI25} solve the capillary even $L_{p}$-Minkowski problem for all $p>-n-1$, and the non-even case for $p>n+1$ through geometric flows.

A further generalization of the $L_p$-Minkowski problem is the Orlicz-Minkowski problem, introduced by Haberl, Lutwak, Yang, and Zhang \cite{HLYZ10},
which in the smooth setting corresponds to solving a Monge-Ampère type equation on $\mathbb{S}^n$:
\begin{equation*}
\varphi(h)\det(\nabla^{2}h+h\sigma)=f^{-1},
\end{equation*}
where $h$, $\sigma$ and $\nabla^{2}$ have the same meaning as in the classical Minkowski problem, $\varphi$ and $f$ are some given smooth functions on $\mathbb{R}$ and $\mathbb{S}^n$ respectively.
When $\varphi(h)=h^{1-p}$, the Orlicz-Minkowski problem reduces to the $L_{p}$-Minkowski problem. This extension has motivated the development of the Orlicz-Brunn-Minkowski theory and the dual Orlicz-Brunn-Minkowski theory.
For further details, we refer to \cite{BIS21, GHW14, GHW15, JL19, LL20, LYZ10, ZZX14} and the references therein.

It is natural to consider the capillary Orlicz-Minkowski problem. In this paper, we mainly consider the existence of solutions to a more general class of capillary Orlicz-Minkowski problem, the corresponding equation is
\begin{equation}\label{omeq-1}
\left\{
\begin{aligned}
&\phi\left(\xi,\frac{h}{\ell}\right)\det(h_{ij}+h\delta_{ij})=f,\qquad&&~\mathrm{in}~\mathcal{C}_{\theta},\\
&\\
&\nabla_{\mu}h=\cot\theta h, \qquad&&~\mathrm{on}~ \partial \mathcal{C}_{\theta},\\
\end{aligned}
\right.
\end{equation}
where $\mu$ is the unit outward co-normal of $\partial \mathcal{C}_{\theta}$ in $\mathcal{C}_{\theta}$. In particular, taking $\phi\left(\xi,s\right)=s^{1-p}$, this recovers the capillary $L_{p}$-Minkowski problem.
Under appropriate assumptions on $\phi$, we establish the existence of smooth solutions to Eq. \eqref{omeq-1}. Namely, we obtain the following theorem:

\begin{theorem}\label{maintherorem1}
Let $\phi:\mathcal{C}_{\theta}\times(0,\infty)\rightarrow(0,\infty)$ be a $C^{\infty}$ function and $\theta\in(0,\frac{\pi}{2})$. For any given positive function $f\in C^{\infty}(\mathcal{C}_{\theta})$ satisfying
\begin{eqnarray} \label{condition1}
\limsup\limits_{s\rightarrow \infty}[\phi(\xi,s)s^{n}]<f<\liminf\limits_{s\rightarrow 0^{+}}[\phi(\xi,s)s^{n}],
\end{eqnarray}
there exists a smooth solution $h$ to Eq. \eqref{omeq-1}.
\end{theorem}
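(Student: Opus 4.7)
The plan is to prove Theorem \ref{maintherorem1} via a geometric flow, exactly as announced in the abstract: design an anisotropic capillary Gauss curvature flow whose stationary solutions satisfy Eq. \eqref{omeq-1} up to a multiplicative constant, prove long-time existence and smooth convergence, and then absorb the constant via the freedom built into condition \eqref{condition1}. Concretely, starting from any smooth, strictly convex capillary hypersurface $\Sigma_0\in\mathcal{K}_\theta$, I would evolve the embedding $X(\cdot,t)$ by
\begin{equation*}
\partial_{t}X=\Bigl(\eta(t)-\frac{f(\tilde{\nu})K}{\phi(\tilde{\nu},h/\ell)}\Bigr)\nu,
\end{equation*}
with the capillary angle $\theta$ maintained on $\partial\Sigma_t\subset\partial\mathbb{R}_+^{n+1}$, and $\eta(t)$ chosen as a Lagrange multiplier preserving a natural geometric functional (for instance the capillary volume of $\widehat{\Sigma}_t$). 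In support-function form on $\mathcal{C}_\theta$ this is a fully nonlinear parabolic Monge-Amp\`ere equation for $h$ with the oblique boundary condition $\nabla_\mu h=\cot\theta\, h$; short-time existence follows from the parabolic theory for oblique problems adapted to $\mathcal{C}_\theta$ as developed in \cite{MWW,HHI25}.

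The next step is to establish uniform a priori estimates, starting with two-sided bounds on $h/\ell$. At an interior maximum of $h/\ell$ one has $h_{ij}+h\delta_{ij}\le (h/\ell)\,\delta_{ij}$ in the spherical-cap sense, so $\det(h_{ij}+h\delta_{ij})^{-1}\ge (h/\ell)^{-n}$, and the evolution equation together with the upper bound in \eqref{condition1} rules out $h/\ell\to\infty$; the lower bound in \eqref{condition1} rules out $h/\ell\to 0^+$ symmetrically. Convexity then supplies the $C^1$ bound. For the $C^2$ bound one applies the maximum principle to the largest principal radius (or to $\log\lambda_{\max}(h_{ij}+h\delta_{ij})-A\log(h/\ell)$), using the already-controlled $C^0, C^1$ norms and the identity $K^{-1}=\det(h_{ij}+h\delta_{ij})$; differentiating $\nabla_\mu h=\cot\theta\, h$ tangentially furnishes explicit oblique boundary data for $h_{ij}$, which, combined with the precise geometry of $\partial\mathcal{C}_\theta$, prevents a boundary extremum. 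Uniform parabolicity is then in hand, and Evans-Krylov-Safonov together with the Schauder theory for oblique boundary problems give uniform $C^{k,\alpha}$ estimates for every $k$.

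With uniform higher regularity, I would extract a smooth convergent subsequence $h(\cdot,t_j)\to h_\infty$ by showing monotonicity of an Orlicz-type capillary entropy of the form
\begin{equation*}
\mathcal{E}(\Sigma_t)=\int_{\mathcal{C}_\theta}\Psi\!\Bigl(\xi,\frac{h}{\ell}\Bigr)\ell\, d\xi-\text{const},
\end{equation*}
where $\Psi(\xi,\cdot)$ is a primitive of $\phi(\xi,\cdot)$. Since $\mathcal{E}$ is non-increasing along the flow with time-derivative proportional to the squared mismatch $\bigl(\eta(t)-f K\phi^{-1}\bigr)^2$, the limit $h_\infty$ is stationary and therefore satisfies $\phi(\xi,h_\infty/\ell)\det((h_\infty)_{ij}+h_\infty\delta_{ij})=c f$ on $\mathcal{C}_\theta$ together with $\nabla_\mu h_\infty=\cot\theta\, h_\infty$; the constant $c=\eta_\infty$ can be absorbed by rescaling $h_\infty\to\lambda h_\infty$, which is admissible precisely because the asymptotic inequalities \eqref{condition1} ensure that the rescaled equation still lies between the prescribed limits.

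The main obstacle I anticipate is the $C^2$ estimate at the capillary boundary $\partial\mathcal{C}_\theta$ in the absence of any evenness hypothesis on $f$. In the even $L_p$-capillary setting one exploits symmetry and the explicit power structure of $\phi$ to build auxiliary functions; here one has neither, and the Orlicz nonlinearity $\phi(\xi,h/\ell)$ couples curvature, support function and the boundary-sensitive weight $\ell$ in a non-homogeneous way. The estimate must therefore be driven entirely by the pinching extracted from \eqref{condition1}, and the auxiliary test function will need to incorporate $\ell$, the defining function of $\partial\mathcal{C}_\theta$, and a carefully chosen correction term so that the obliqueness of $\nabla_\mu h=\cot\theta\, h$ generates a favorable sign in the Hopf-lemma step.
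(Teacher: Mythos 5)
Your overall strategy (flow, a priori estimates, monotone entropy, pass to the limit) is the correct one, but the specific flow you propose has a flaw that cannot be repaired in the Orlicz setting, and the paper's flow is chosen precisely to sidestep it.

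You use a \emph{normalized} flow $\partial_t X=\bigl(\eta(t)-fK/\phi\bigr)\nu$ with a Lagrange multiplier $\eta(t)$ preserving some geometric functional, so a stationary limit $h_\infty$ satisfies $\phi(\xi,h_\infty/\ell)\det((h_\infty)_{ij}+h_\infty\delta_{ij})=c f$ for a constant $c=1/\eta_\infty$, and you propose to absorb $c$ by rescaling $h_\infty\mapsto\lambda h_\infty$. That step is exactly where the argument breaks for Orlicz. Under rescaling,
\begin{equation*}
\phi\bigl(\xi,\tfrac{\lambda h_\infty}{\ell}\bigr)\det\bigl(\lambda(h_\infty)_{ij}+\lambda h_\infty\delta_{ij}\bigr)
=\lambda^{n}\,\frac{\phi(\xi,\lambda h_\infty/\ell)}{\phi(\xi,h_\infty/\ell)}\,cf,
\end{equation*}
and to make this equal $f$ you would need $\lambda^{n}\,\phi(\xi,\lambda s)/\phi(\xi,s)=1/c$ uniformly in $(\xi,s)$. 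That holds when $\phi(\xi,s)=s^{1-p}$ (the $L_p$ case, giving $\lambda^{n+1-p}=1/c$), but fails for a general $C^\infty$ positive $\phi$: there is no homogeneity to exploit, and condition \eqref{condition1} constrains only the asymptotic pinching of $\phi(\xi,s)s^{n}$, not its multiplicative scaling behavior. So the constant $c$ cannot, in general, be normalized away. The paper instead uses the \emph{unnormalized} flow $\partial_t h=-fhK/\phi+h$, where the dilation term $+h$ replaces the Lagrange multiplier; stationary points of this flow satisfy $fK/\phi=1$, i.e.\ Eq.~\eqref{omeq-1} exactly, with no dangling constant, and the pinching \eqref{condition1} is what traps $h/\ell$ between fixed positive bounds so the dilation does not run off to $0$ or $\infty$.

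A second, related issue: your monotone quantity is built from a primitive $\Psi$ of $\phi$, but for this flow the right choice is a primitive $\Phi$ of $1/\phi$. The paper's functional is $J(\Sigma_t)=\int_{\mathcal{C}_\theta}f\,\Phi(\xi,h/\ell)\,\ell\,d\xi-V(\widehat{\Sigma_t})$ with $\Phi(\xi,t)=\int_0^t\phi(\xi,s)^{-1}ds$; its derivative along $\partial_t h=-fhK/\phi+h$ collapses to $-\int_{\mathcal{C}_\theta}\tfrac{h}{K}\bigl(\tfrac{fK}{\phi}-1\bigr)^2d\xi\le 0$. With $\Psi'=\phi$ and your normalized flow, the time-derivative is $\eta\int\phi\,d\xi-\int fK\,d\xi$ (or similar), which has no obvious sign, so the dissipation estimate on which convergence rests would not go through as stated. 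Finally, on the $C^2$ estimate you sketch a maximum principle for $\lambda_{\max}$; the paper takes the arguably more robust route of first bounding $K$ from above and below (via the auxiliary functions $-h_t/(h-\varepsilon_0)$ and $\log(f^{-1}K^{-1})-N\log h$, using Robin-type boundary identities to kill boundary maxima) and only then bounding $\sigma_1+\tfrac{A}{2}|\nabla h|^2$; you should be aware the boundary analysis for a direct $\lambda_{\max}$ test function is delicate on $\partial\mathcal{C}_\theta$ and is one of the places where the paper's specific choices matter.
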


The same condition as \eqref{condition1} was used in \cite{BIS21, LL20} to solve the classical Orlicz-Minkowski problem and dual Orlicz-Minkowski problem, and therefore we believe that adopting it here is appropriate.
If $\phi\left(\xi,s\right)=s^{1-p}$ for $p>n+1$, then the condition \eqref{condition1} holds. Thus the solution to Eq. \eqref{omeq-1} can resolve the capillary $L_{p}$-Minkowski problem for $p>n+1$.
Inspired by \cite{BIS21, LL20}, we study the capillary Orlicz-Minkowski problem through an anisotropic capillary Gauss curvature flow.

Let $X_{0}:\mathcal{C}_{\theta}\rightarrow\overline{\mathbb{R}_{+}^{n+1}}$ be an embedding of a smooth, strictly convex capillary hypersurface $\Sigma_{0}=X_{0}(\mathcal{C}_{\theta})$ (see \eqref{def-capillary X} below). Consider a family of smooth,
strictly convex capillary hypersurfaces $\Sigma_{t}:=X(\mathcal{C}_{\theta}, t)$ satisfying
\begin{equation}\label{flow-1}
\left\{
\begin{aligned}
&\partial_{t}X=-fK\frac{\langle X,\nu\rangle}{\phi \ell}\tilde{\nu}+X, \qquad
&&~\mathrm{in}~\mathcal{C}_{\theta}\times[0,\infty),\\
&\tilde{\nu}\cdot e=0,  \qquad&&~\mathrm{on}~ \partial \mathcal{C}_{\theta}\times[0,\infty),\\
&X(\cdot,0)=X_{0}, \qquad &&~\mathrm{in}~\mathcal{C}_{\theta},
\end{aligned}
\right.
\end{equation}
where $\phi=\phi(X,\frac{\langle X,\nu\rangle}{\ell})$, $\ell=1+\cos\theta\langle \nu,e\rangle$.

For the classical case (without capillarity), similar flows have been used to study the classical Orlicz-Minkowski problem and dual Orlicz-Minkowski problem.
In the special case where $\phi\left(\xi,s\right)=s^{1-p}$ for $p>n+1$, flow \eqref{flow-1} reduces to the flow that was applied in \cite{HHI25} to resolve the capillary $L_{p}$-Minkowski problem.
The geometric flow generated by the Gauss curvature was first studied by Firey \cite{Fir74} to describe the shape of a tumbling stone. Since then, various Gauss curvature flows have been extensively studied and
have proven to be an efficient tool in the study of Minkowski-type problems, for example in \cite{DL23, DL25, GN, LWW, LL20} and references therein.
Recently, an isotropic capillary Gauss curvature flow was considered by Mei, Wang, and Weng \cite{MWWgf}, there have also been studies on anisotropic capillary Minkowski-type problems and geometric inequalities, one can refer to \cite{DGL, GL}.

Theorem \ref{maintherorem1} is a consequence of the following long-time existence and convergence of  flow \eqref{flow-1}.

\begin{theorem}\label{maintherorem2}
Let $\theta\in(0,\frac{\pi}{2})$, $\Sigma_{0}$ be a smooth, strictly convex capillary hypersurface in $\overline{\mathbb{R}_{+}^{n+1}}$ with positive capillary support function. Assume $f\in C^{\infty}(\mathcal{C}_{\theta})$ and
$\phi\in C^{\infty}(\mathcal{C}_{\theta}\times(0,\infty))$ are positive functions satisfying the condition \eqref{condition1}. Then flow \eqref{flow-1} has a smooth, strictly convex solution $\Sigma_{t}$ for all time $t>0$,
and a subsequence of $\Sigma_{t}$ converges in $C^{\infty}$-topology to a smooth, strictly convex capillary hypersurface $\Sigma$ satisfying Eq. \eqref{omeq-1}.
\end{theorem}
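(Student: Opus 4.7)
The plan is to reformulate flow \eqref{flow-1} as a fully nonlinear parabolic Neumann problem for the capillary support function $h(\xi,t)$ on $\mathcal{C}_\theta\times[0,\infty)$, run the standard a priori--estimate program for Gauss-curvature type flows adapted to the capillary setting, and then extract a convergent subsequence using a monotone Orlicz entropy whose critical points solve \eqref{omeq-1}. Since Theorem \ref{maintherorem1} is a direct consequence of the convergence asserted in Theorem \ref{maintherorem2}, only the latter needs a proof. As a first step I would differentiate the relation $h=\langle X,\nu\rangle$ (viewed as a function on $\mathcal{C}_\theta$ via $\tilde\nu^{-1}$) along \eqref{flow-1}; using $\langle\tilde\nu,\nu\rangle=\ell=1+\cos\theta\langle\nu,e\rangle$ and $K^{-1}=\det(h_{ij}+h\delta_{ij})$, this yields an equation of the schematic form
\begin{equation*}
\partial_t h \;=\; h \;-\; \frac{fK\,\langle X,\nu\rangle}{\phi(\xi,h/\ell)\,\ell},
\end{equation*}
with the oblique Neumann condition $\nabla_\mu h=\cot\theta\, h$ on $\partial\mathcal{C}_\theta$ arising from $\tilde\nu\cdot e=0$, exactly as in \cite{MWWgf,HHI25}. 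Short-time existence then follows from standard parabolic theory, the linearization being uniformly oblique as long as the solution remains strictly convex.

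Second, modeled on \cite{BIS21,LL20}, I would introduce an Orlicz-type entropy
\begin{equation*}
\mathcal{J}(t)=\int_{\mathcal{C}_\theta}\Phi(\xi,h/\ell)\,\ell\,d\xi-\int_{\mathcal{C}_\theta} f\log h\,d\xi,
\end{equation*}
with $\Phi(\xi,\cdot)$ an antiderivative of $s\mapsto\phi(\xi,s)/s$, chosen so that $\tfrac{d}{dt}\mathcal{J}\le 0$ along the flow with equality only at stationary solutions of \eqref{omeq-1}. The monotonicity uses integration by parts against the capillary area measure $m_\theta$ together with the Neumann condition, which makes the resulting boundary integrals vanish. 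The uniform two-sided bound $0<c_1\le h/\ell\le c_2$ follows from monotonicity of $\mathcal{J}$ combined with \eqref{condition1} by the same argument as in the classical setting: if $\inf h/\ell\to 0$ or $\sup h/\ell\to\infty$, condition \eqref{condition1} forces $\mathcal{J}\to-\infty$, contradicting the lower bound obtained from the initial data.

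Third, I would derive the remaining estimates in the order: a $C^1$ gradient bound from the $C^0$ bound and convexity; a lower bound on the principal radii of curvature (preservation of strict convexity) via a maximum principle applied to the smallest eigenvalue of $(h_{ij}+h\delta_{ij})$; and an upper bound on the principal radii via a maximum principle on an auxiliary quantity such as $\log\lambda_{\max}-Ah$ with $A$ large. Each of the latter two requires ruling out boundary maxima under the oblique condition, which I would handle by adapting the boundary commutator identities developed in \cite{MWWgf,HHI25} to the functional dependence on $\phi(\xi,h/\ell)$; the assumption $\theta\in(0,\pi/2)$ is used exactly here to secure the correct sign. Once the radii of curvature are controlled from above and below, the equation becomes uniformly parabolic, and Krylov--Safonov together with Lieberman's boundary regularity for oblique problems gives $C^{2,\alpha}$; Schauder bootstrap then gives uniform $C^{k,\alpha}$ estimates for every $k$, establishing long-time existence. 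Finally, integrating $-\tfrac{d}{dt}\mathcal{J}\ge 0$ in time produces a sequence $t_k\to\infty$ along which $\partial_t h\to 0$; Arzelà--Ascoli combined with the $C^k$ estimates extracts a smooth limit $h_\infty$ solving $\partial_t h_\infty\equiv 0$, which unwinds to the Monge--Amp\`ere equation with oblique boundary condition in \eqref{omeq-1}.

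The principal obstacle I anticipate is Step 3, specifically the upper bound on the principal radii of curvature. The interior maximum-principle arguments of \cite{BIS21,LL20} transfer easily to the interior of $\mathcal{C}_\theta$, but one must verify that the chosen auxiliary function has a $\nabla_\mu$-derivative with the correct sign on $\partial\mathcal{C}_\theta$ so that its spatial maximum cannot occur on the boundary. This boundary analysis, performed under the general $\phi(\xi,h/\ell)$-dependence rather than the homogeneous $L_p$-dependence handled in \cite{HHI25}, is the technical heart of the argument and will dictate what structural hypotheses beyond \eqref{condition1}, if any, are needed to push the argument through.
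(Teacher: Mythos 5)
Your overall architecture (reduce \eqref{flow-1} to a scalar Robin problem for $h$, establish $C^0$, $C^1$ and curvature estimates, bootstrap, pass to the limit via a monotone functional) matches the paper's, but there are two genuine gaps.

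The entropy you propose is wrong. You take
\begin{equation*}
\mathcal{J}(t)=\int_{\mathcal{C}_\theta}\Phi\left(\xi,\frac{h}{\ell}\right)\ell\,d\xi-\int_{\mathcal{C}_\theta} f\log h\,d\xi,\qquad \partial_s\Phi(\xi,s)=\frac{\phi(\xi,s)}{s}.
\end{equation*}
Differentiating along $\partial_t h=h\bigl(1-\tfrac{fK}{\phi}\bigr)$ (the correct scalar equation; note your schematic $\partial_t h$ should have no residual $\ell$ after projecting $\partial_t X$ onto $\nu$, since $\langle\tilde\nu,\nu\rangle=\ell$) gives
\begin{equation*}
\frac{d}{dt}\mathcal{J}=\int_{\mathcal{C}_\theta}\frac{\partial_t h}{h}\,(\ell\,\phi-f)\,d\xi=\int_{\mathcal{C}_\theta}\Bigl(1-\frac{fK}{\phi}\Bigr)(\ell\,\phi-f)\,d\xi,
\end{equation*}
which has no definite sign because $\phi-fK$ and $\ell\phi-f$ have no pointwise relation. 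The functional that works is
\begin{equation*}
J(\Sigma_t)=\int_{\mathcal{C}_\theta}f(\xi)\,\Phi\left(\xi,\frac{h}{\ell}\right)\ell\,d\xi-V(\widehat{\Sigma_t}),\qquad \Phi(\xi,s)=\int_0^{s}\frac{d\tau}{\phi(\xi,\tau)},
\end{equation*}
for which $\tfrac{d}{dt}J=\int(\tfrac{f}{\phi}-\tfrac{1}{K})\partial_t h\,d\xi=-\int\tfrac{h}{K}(\tfrac{fK}{\phi}-1)^2 d\xi\le 0$, with equality exactly at solutions of \eqref{omeq-1}. The weight $f$, the antiderivative of $1/\phi$ (not $\phi/s$), and the volume term (not $\int f\log h$) are all structurally necessary; no boundary integral arises since $\tfrac{d}{dt}V=\int\partial_t h\,K^{-1}d\xi$ holds directly.

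Relatedly, you derive the $C^0$ bound from the entropy ("if $\inf h/\ell\to0$ or $\sup h/\ell\to\infty$, $\mathcal{J}\to-\infty$"), but the paper proves $C^0$ independently by a maximum principle on $u=h/\ell$: at a spatial minimum (boundary points handled via the Robin condition and the identity \eqref{prop-2-eq2}) one gets $\nabla^2 u\geq0$, hence $b_{ij}\geq u\,\delta_{ij}$ and $\sigma_n\geq u^n$, so that $\partial_t u\geq u(1-fu^{-n}/\phi)$; condition \eqref{condition1} then forces $\partial_t u>0$ whenever $u$ is small, and symmetrically for the maximum. This is cleaner and avoids a chicken-and-egg issue with coercivity of the entropy, which in the capillary setting (no free translations or rescaling) would need its own argument. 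Finally, on the curvature estimates: the paper deliberately bounds $K$ from above and below first (via $\tfrac{-h_t}{h-\varepsilon_0}$ and $\log(f^{-1}K^{-1})-N\log h$, with explicit boundary computations showing $\nabla_\mu Q<0$), and only then $\sigma_1$ via $\sigma_1+\tfrac{A}{2}|\nabla h|^2$, precisely to sidestep the boundary difficulties you flag; your proposed $\log\lambda_{\max}-Ah$ would have to pass the same boundary sign check, which does not follow obviously, whereas $\sigma_1+\tfrac{A}{2}|\nabla h|^2$ does via $h_{\mu\beta}=0$ and $\nabla_\mu b_{\beta\beta}=\cot\theta(b_{\mu\mu}-b_{\beta\beta})$.
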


Using methods from elliptic equations, Wang and Zhu \cite{WZ25} also rasied the capillary Orlicz-Minkowski problem. They prove that if $\tilde{\phi}:[0,+\infty)\rightarrow[0,+\infty)$ is a $C^{2}$-smooth convex function,
then the capillary Orlicz-Minkowski problem is equivalent to solve the following Robin boundary value problem of the Monge-Amp$\grave{\mathrm{e}}$re type equation:
\begin{equation}\label{WZ-omeq-1}
\left\{
\begin{aligned}
&\tilde{\phi}\left(\frac{\ell}{h}\right)h\det(h_{ij}+h\delta_{ij})=f,\qquad&&~\mathrm{in}~\mathcal{C}_{\theta},\\
&\\
&\nabla_{\mu}h=\cot\theta h, \qquad&&~\mathrm{on}~ \partial \mathcal{C}_{\theta}.\\
\end{aligned}
\right.
\end{equation}
 They proved the following result.

\begin{theoremA}\cite[Theorem 1.1]{WZ25}
Let $\tilde{\phi}:[0,+\infty)\rightarrow[0,+\infty)$ be a $C^{2}$-smooth, strictly increasing, convex, and log-concave function satisfying $\tilde{\phi}(0)=0$, $\lim\limits_{t\rightarrow 0^{+}}\tilde{\phi}'(t)=0$,
$\frac{d}{dt}\log\frac{\tilde{\phi}(t)}{t}\geq 0$ for all $t>0$.
For $\theta \in (0, \frac{\pi}{2})$, let $f\in C^{2}(\mathcal{C}_{\theta})$ be an even, positive function and satisfy
\begin{equation*}
\frac{1}{n+1}\int_{\mathcal{C}_{\theta}}f\geq\tilde{\phi}(|\widehat{\mathcal{C}_{\theta}}^{\frac{1}{n+1}}|).
\end{equation*}
Then there exists a smooth, symmetric $\widehat{\Sigma}\in\mathcal{K}^{o}_{\theta}$ (i.e. the family of all capillary convex bodies enclosing the origin) with $|\widehat{\Sigma}|=1$,
such that its support function $h$ solves \eqref{WZ-omeq-1} and satisfies the orthogonality condition with respect to $f$.
\end{theoremA}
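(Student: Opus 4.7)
The natural plan is to find a solution of \eqref{WZ-omeq-1} as the minimizer of a suitable Orlicz-type energy over even capillary convex bodies of unit volume, thereby recasting Theorem A as a constrained variational problem on $\mathcal{K}^o_\theta$. The structural assumptions on $\tilde{\phi}$ (strict monotonicity, convexity, log-concavity, and the vanishing $\tilde{\phi}(0)=0$ together with $\lim_{t\to 0^+}\tilde{\phi}'(t)=0$) are precisely the conditions needed to form a well-defined Orlicz potential $\widetilde{\Phi}$, essentially an antiderivative of $\tilde{\phi}$, and to ensure that the associated functional
\begin{equation*}
\mathcal{F}(\widehat{K})=\int_{\mathcal{C}_\theta}\widetilde{\Phi}\!\left(\frac{h_K}{\ell}\right)f\,d\xi
\end{equation*}
is convex-compatible with Hausdorff convergence of support functions and has a first variation under $|\widehat{K}|=1$ that reproduces exactly the Monge-Amp\`ere equation in \eqref{WZ-omeq-1}, with the Robin condition $\nabla_\mu h=\cot\theta\,h$ emerging automatically from the capillary Minkowski identity and the boundary term of integration by parts on $\partial\mathcal{C}_\theta$.

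The first main step is existence of a minimizer. Evenness of $f$ restricts attention to $f$-symmetric capillary bodies, which together with the normalization $|\widehat{K}|=1$ yields a uniform diameter bound. A Blaschke-type selection argument, adapted from the closed convex-body setting to $\mathcal{K}^o_\theta$, then extracts a Hausdorff-convergent subsequence from any minimizing sequence. The log-concavity of $\tilde{\phi}$ and the hypothesis $\frac{d}{dt}\log(\tilde{\phi}(t)/t)\geq 0$ make $\widetilde{\Phi}(\cdot/\ell)$ convex, which implies lower semicontinuity of $\mathcal{F}$ along the limit and gives an infimum-attaining capillary body $\widehat{\Sigma}$ in the closure of the admissible class.

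The principal obstacle, and the entire content of the integral hypothesis $\tfrac{1}{n+1}\int_{\mathcal{C}_\theta}f\geq\tilde{\phi}(|\widehat{\mathcal{C}_\theta}|^{1/(n+1)})$, is to exclude degeneracy of the minimizer, i.e.\ to show that $h_{\widehat{\Sigma}}>0$ everywhere on $\mathcal{C}_\theta$ rather than collapsing onto a lower-dimensional face. The idea is to evaluate $\mathcal{F}$ on the rescaled capillary spherical cap normalized to unit volume and compare it against the limiting energy along any candidate degenerating minimizing sequence; the blow-up of $\widetilde{\Phi}$ at the relevant boundary point, combined with an isoperimetric-type comparison of $\int f$ with $\tilde{\phi}(|\widehat{\mathcal{C}_\theta}|^{1/(n+1)})$ applied to a capillary body of unit volume, is precisely what forces the cap-value to be strictly smaller than any degenerate limit, ruling out collapse. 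The orthogonality condition with respect to $f$ then appears as the Lagrange-multiplier normalization from translation (or capillary-translation) invariance, which evenness of $f$ pins down. Once $h>0$ is established, the equation becomes a uniformly elliptic Monge-Amp\`ere equation with positive right-hand side; strict convexity of $h$ follows, and Caffarelli interior regularity together with the oblique-derivative boundary theory for the Robin condition upgrades $h$ to $C^\infty$. The crux of the argument is thus the non-degeneracy comparison driven by the integral hypothesis; the selection and regularity parts, while technical, follow established routes.
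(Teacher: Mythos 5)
Theorem A is not proved in the paper you were given: it is quoted verbatim from \cite{WZ25} purely as a point of comparison for the authors' own result. The paper's actual contribution is Theorem \ref{maintherorem1}, which concerns Eq.\ \eqref{omeq-1} (of which \eqref{WZ-omeq-1} is the special case $\phi(\xi,s)=\tilde{\phi}(1/s)\,s$) under the genuinely different hypothesis \eqref{condition1}, without evenness and without a unit-volume normalization; it is proved not variationally but by running the anisotropic capillary Gauss curvature flow \eqref{flow-1}/\eqref{flow-2}, establishing a priori $C^0$, $C^1$ and curvature estimates (Lemmas \ref{lemmaC0}--\ref{lemmaC2-3}), and passing to the limit via the monotone functional $J(\Sigma_t)$ (Lemma \ref{mon-decrease}). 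So whatever the merits of your variational sketch, it does not correspond to anything in this paper.

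As a blind reconstruction of the Wang--Zhu argument your outline is structurally plausible (it mirrors the classical even Orlicz Minkowski problem of Haberl--Lutwak--Yang--Zhang: minimize an Orlicz energy over unit-volume even bodies, Blaschke selection, non-degeneracy, then regularity). But two points would need real work rather than assertion. First, the non-degeneracy step is only gestured at: you would have to say precisely what degeneration means in $\mathcal{K}^o_\theta$ (collapse onto $\partial\mathbb{R}^{n+1}_+$ as opposed to flattening in a horizontal direction), compute the energy $\mathcal{F}$ of the volume-normalized spherical cap using the capillary mixed-volume/area identity, and show that any degenerating minimizing sequence forces $\mathcal{F}$ strictly above that value, using the hypothesis $\frac{1}{n+1}\int_{\mathcal{C}_\theta}f\geq\tilde{\phi}(|\widehat{\mathcal{C}_\theta}|^{1/(n+1)})$. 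Second, the ``orthogonality condition with respect to $f$'' in the statement is almost certainly not the Lagrange multiplier from translation invariance as you suggest: in the capillary setting only horizontal translations preserve $\mathcal{K}_\theta$, and evenness already removes that freedom, so this term has a different meaning in \cite{WZ25} and should be unpacked from that source rather than guessed.
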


In Eq. \eqref{omeq-1}, the case $\phi\left(\xi,s\right)=\tilde{\phi}\left(1/s\right)s$ corresponds to the capillary Orlicz-Minkowski problem (Eq. \eqref{WZ-omeq-1}).
Theorem \ref{maintherorem1} solves the capillary Orlicz-Minkowski problems in non-even case, note that the condition \eqref{condition1} and the requirement on $f$, $\tilde{\phi}$ in Theorem A are independent. For further details, one can refer to \cite{WZ25}.

This paper is organized as follows. In section 2, we give some basic knowledge about capillary hypersurfaces and flow \eqref{flow-1} in the Euclidean half-space. In section 3, the long-time existence of flow \eqref{flow-1} will be proved.
The key step is to establish uniform positive upper and lower bounds for capillary support function and the principal curvatures along the flow. To circumvent the extra boundary difficulties in obtaining
$C^{2}$ estimates along the flow, we first establish lower and upper bounds on the Gauss curvature and then get the upper bound of the principal radii of curvature. In section 4, by considering a monotone decreasing geometric functional along the flow,
we prove that the smooth solution of flow \eqref{flow-1} can converge for a subsequence to a smooth solution of Eq. \eqref{omeq-1}, and thus complete the proof of Theorems \ref{maintherorem1} and \ref{maintherorem2}.

\section{Preliminaries} \label{S2}

In this section, we provide some basic knowledge about capillary hypersurfaces in $\overline{\mathbb{R}_{+}^{n+1}}$. For more details, one can refer to \cite{HHI25,MWW,MWWgf,MWWXAF}.

Let $M$ be a compact orientable smooth manifold of
dimension $n$ with boundary $\partial M$ and let $X:M\rightarrow\overline{\mathbb{R}_{+}^{n+1}}$ be a properly embedded smooth hypersurface satisfying
\begin{equation*}
X(\mathrm{int}~M)\subset\mathbb{R}_{+}^{n+1}\quad\mathrm{and}\quad X(\partial M)\subset\partial\mathbb{R}_{+}^{n+1}.
\end{equation*}
We denote that $\Sigma=X(M)$ and $\partial\Sigma=X(\partial M)$. Then denote by $\widehat{\partial\Sigma}$ the region enclosed by $\partial\Sigma$ in $\partial\mathbb{R}_{+}^{n+1}$, and by $\widehat{\Sigma}$ the region bounded by $\Sigma$ and $\partial\mathbb{R}_{+}^{n+1}$. Assuming $\Sigma$ is a strictly convex hypersurface, meaning that $\Sigma$ together with $\partial\mathbb{R}_{+}^{n+1}$ bounds a convex body and that $\Sigma$ has a positive definite second fundamental form. Consequently, $\partial\Sigma$ is a closed, strcitly
convex hypersurfce in $\partial\mathbb{R}_{+}^{n+1}$ (see, e.g., \cite[Corollary 2.5]{WWX24}).

A strictly convex hypersurface $\Sigma$ that intersects $\partial\Sigma\subset\partial\mathbb{R}_{+}^{n+1}$ at a constant contact angle $\theta \in (0, \pi)$ is called a capillary strictly convex hypersurface.
For any $r>0$, a simple example of a capillary strictly convex hypersurface is the capillary spherical cap
\begin{equation*}
\mathcal{C}_{\theta,r}=\{\xi\in\overline{\mathbb{R}_{+}^{n+1}}:|\xi-r\cos\theta e|=r\},
\end{equation*}
where $e:=-E_{n+1}$ is the unit outward normal of $\partial\mathbb{R}_{+}^{n+1}\subset\overline{\mathbb{R}_{+}^{n+1}}$. For simplicity, let $\mathcal{C}_{\theta}:=\mathcal{C}_{\theta,1}$ be the unit spherical cap.

Take $M=\mathcal{C}_{\theta}$ and consider a strictly convex capillary hypersurface $\Sigma=X(\mathcal{C}_{\theta})$. Let $\nu$ be the unit outward normal of $\Sigma$ in $\overline{\mathbb{R}_{+}^{n+1}}$ and
$\mu$ be the unit outward co-normal of $\partial\Sigma$ in $\Sigma$. Denote by $D$ the Euclidean covariant derivative, denote by $\sigma$ and $\nabla$ the spherical metric and the covariant derivative on $\mathcal{C}_{\theta}$, respectively.
Following \cite{MWWXAF}, the Gauss map $\nu:\Sigma\rightarrow\mathbb{S}^{n}$ has its image in
\begin{equation*}
\mathbb{S}_{\theta}^{n}:=\{y\in\mathbb{S}^{n}~|~y_{n+1}\geq\cos\theta\}.
\end{equation*}
Let $T:\mathbb{S}_{\theta}^{n}\rightarrow\mathcal{C}_{\theta}$ be the translation given by
\begin{equation*}
T(y)=y+\cos\theta e.
\end{equation*}
Therefore, the capillary Gauss map $\tilde{\nu}:\Sigma\rightarrow\mathcal{C}_{\theta}$ is defined as
\begin{equation*}
\tilde{\nu}:=T\circ \nu=\nu+\cos\theta e.
\end{equation*}
From \cite[Lemma 2.2]{MWWXAF}, we can parametrize $\Sigma$ by the inverse capillary Gauss map, i.e., $X:\mathcal{C}_{\theta}\rightarrow \Sigma$, given by
\begin{equation}\label{def-capillary X}
X(\xi)=\tilde{\nu}^{-1}(\xi)=\nu^{-1}\circ T^{-1}(\xi)=\nu^{-1}(\xi-\cos\theta e),
\end{equation}
where $\xi\in\mathcal{C}_{\theta}$. The support function of $\Sigma$ is given by
\begin{equation*}
h(X)=\langle X,\nu(X)\rangle.
\end{equation*}
By the inverse capillary Gauss map, $h$ is regarded as a function on $\mathcal{C}_{\theta}$,
\begin{equation}\label{def-capillary sup}
h(\xi)=\langle X(\xi),\nu(X(\xi))\rangle=\langle X(\xi),T^{-1}(\xi)\rangle=\langle \tilde{\nu}^{-1}(\xi),\xi-\cos\theta e\rangle.
\end{equation}
The function $h$ in the parametrization \eqref{def-capillary sup} is called \emph{the capillary support function} of $\Sigma$ and $h$ satisfies the capillary boundary condition \cite[Lemma 2.4]{MWWXAF}
\begin{equation*}
\nabla_{\mu}h=\cot\theta h,\quad\mathrm{on}~\partial\mathcal{C}_{\theta}.
\end{equation*}
It is clear that the capillary Gauss map for $\mathcal{C}_{\theta}$ is the identity map from $\mathcal{C}_{\theta}\rightarrow\mathcal{C}_{\theta}$ and
\begin{equation*}
h_{\mathcal{C}_{\theta}}(\xi)=\langle \xi,\xi-\cos\theta e\rangle=\sin^{2}\theta+\cos\theta\langle \xi,e\rangle=1+\cos\theta\langle \nu,e\rangle.
\end{equation*}
For simplicity, we denote
\begin{equation}\label{def-capillary ell}
\ell(\xi):=\sin^{2}\theta+\cos\theta\langle \xi,e\rangle=1+\cos\theta\langle \nu,e\rangle,
\end{equation}
and $\ell$ also satisfies the capillary boundary condition
\begin{equation*}
\nabla_{\mu}\ell=\cot\theta \ell,\quad\mathrm{on}~\partial\mathcal{C}_{\theta}.
\end{equation*}
Therefore $\ell$ is the capillary support function of $\mathcal{C}_{\theta}$. We state two useful propositions below, their detailed proofs can be found in \cite{MWW,MWWXAF}, here we omit the proofs and only list the statements.

\begin{proposition}\label{prop-1}
\cite[Lemma 2.4]{MWWXAF} Choosing a local orthonormal frame $\{e_{i}\}_{i=1}^{n}$ on $\mathcal{C}_{\theta}$. The second fundamental form $b_{ij}$ of $\Sigma$ is given by
\begin{equation*}
b_{ij}=\nabla^{2}_{ij}h+h\delta_{ij},
\end{equation*}
where $\nabla^{2}_{ij}h=\nabla^{2}h(e_{i}, e_{j})$, and the Gauss curvature of $\Sigma$ at $X(\xi)$ is
\begin{equation*}
K(X(\xi))=\frac{1}{\det(\nabla^{2}h(\xi)+h(\xi)I)}.
\end{equation*}
\end{proposition}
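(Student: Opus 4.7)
The plan is to reduce the statement to the classical inverse-Gauss-map formulas on the round sphere by using the Euclidean translation $T(y) = y + \cos\theta\, e$, which identifies the upper spherical cap $\mathbb{S}_{\theta}^{n} = \{y\in\mathbb{S}^{n}: y_{n+1} \geq \cos\theta\}$ with the capillary cap $\mathcal{C}_{\theta}$. Since $T$ is a rigid motion of $\mathbb{R}^{n+1}$, it carries $\mathbb{S}_{\theta}^{n}$ bijectively onto $\mathcal{C}_{\theta}$ and restricts to a Riemannian isometry of the two submanifolds equipped with their induced metrics. In particular, an orthonormal frame $\{e_{i}\}_{i=1}^{n}$ on $\mathcal{C}_{\theta}$ with respect to $\sigma$ pulls back via $T$ to an orthonormal frame on $\mathbb{S}_{\theta}^{n}$, and the two covariant derivatives agree on pulled-back functions.

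First I would introduce $y := T^{-1}(\xi) = \xi - \cos\theta\, e \in \mathbb{S}_{\theta}^{n}$ and set $\tilde{h}(y) := h(T(y))$. By \eqref{def-capillary sup},
\begin{equation*}
\tilde{h}(y) = \langle X(\xi), y\rangle = \langle \nu^{-1}(y), y\rangle,
\end{equation*}
so $\tilde{h}$ is exactly the classical support function of the strictly convex hypersurface $\Sigma \subset \mathbb{R}^{n+1}$ restricted to the portion of $\mathbb{S}^{n}$ covered by its Gauss map; and the parametrization $X = \nu^{-1}\circ T^{-1}$ given by \eqref{def-capillary X} is the usual inverse-Gauss-map parametrization of $\Sigma$ composed with the isometry $T^{-1}$.

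Next I would invoke the classical spherical identity: writing $X = \tilde{h}\,y + \nabla^{\mathbb{S}^{n}}\tilde{h}$ and differentiating tangentially in an orthonormal spherical frame gives $dX = (\nabla^{2}_{\mathbb{S}^{n}}\tilde{h} + \tilde{h}\,I)\,dy$, so the symmetric matrix $\nabla^{2}\tilde{h} + \tilde{h}\,I$ is precisely the matrix of principal radii of curvature of $\Sigma$ expressed in that orthonormal frame. Transferring through the isometry $T$ and using that $\nabla^{2}h$ on $\mathcal{C}_{\theta}$ coincides with the pushforward of $\nabla^{2}_{\mathbb{S}^{n}}\tilde{h}$ under the frame identification, one obtains
\begin{equation*}
b_{ij} = \nabla^{2}_{ij}h + h\delta_{ij}
\end{equation*}
in the frame $\{e_{i}\}$, and since the Gauss curvature is the reciprocal of the product of the principal radii,
\begin{equation*}
K(X(\xi)) = \frac{1}{\det(\nabla^{2}h(\xi) + h(\xi)I)}.
\end{equation*}

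The only substantive verifications required are that $T$ is indeed a Riemannian isometry $\mathbb{S}_{\theta}^{n} \to \mathcal{C}_{\theta}$ and that $X = \nu^{-1}\circ T^{-1}$ genuinely agrees with \eqref{def-capillary X}; both are immediate from the defining formulas, so no real obstacle is anticipated. This is consistent with the proposition being quoted without proof from \cite[Lemma 2.4]{MWWXAF}.
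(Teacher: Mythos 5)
The paper itself gives no proof of Proposition~\ref{prop-1}: it is quoted verbatim from \cite[Lemma 2.4]{MWWXAF}, and the authors explicitly state ``here we omit the proofs and only list the statements.'' Your proposal is therefore supplying a derivation rather than reproducing one, and the derivation you give is correct. Reducing to the round-sphere picture via the rigid translation $T(y)=y+\cos\theta\,e$ is exactly the right move: since $\mathcal{C}_{\theta}$ is a piece of the unit sphere centered at $\cos\theta\,e$, the map $T$ is a Riemannian isometry from $\mathbb{S}^{n}_{\theta}$ onto $\mathcal{C}_{\theta}$, so Hessians of pulled-back functions correspond, and $\tilde h(y)=h(T(y))=\langle\nu^{-1}(y),y\rangle$ is precisely the classical support function of $\Sigma$. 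Applying the standard identity $X=\tilde h\,y+\nabla^{\mathbb{S}^n}\tilde h$, $dX=(\nabla^{2}_{\mathbb{S}^n}\tilde h+\tilde h\,I)\,dy$, and transporting back by $T$ gives both formulas. One small point worth making explicit (you state it implicitly when you say ``matrix of principal radii of curvature''): the object called the ``second fundamental form'' $b_{ij}$ here is $II$ expressed in the spherical frame $\{X_{*}e_i\}$, i.e.\ the inverse of the shape operator in an orthonormal frame on $\Sigma$, which is why $\det(\nabla^2 h+hI)$ is the product of the radii of curvature and the Gauss-curvature formula $K=1/\det(\nabla^2 h+hI)$ follows. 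With that understood, there is no gap; this is essentially the argument one finds in the cited reference.
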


Along the boundary $\partial\mathcal{C}_{\theta}$, we choose an orthonormal frame $\{e_{i}\}_{i=1}^{n}$ with $e_{n}=\mu$, where $\mu$ is again the unit co-normal of $\partial\mathcal{C}_{\theta}\subset\mathcal{C}_{\theta}$.

\begin{proposition}\label{prop-2}
\cite[Proposition 2.8]{MWW} The capillary support function $h$ satisfies the following boundary conditions on $\partial\mathcal{C}_{\theta}$:
\begin{equation}\label{prop-2-eq1}
\nabla^{2}_{\alpha n}h=0,
\end{equation}
and
\begin{equation}\label{prop-2-eq2}
\nabla^{2}_{\alpha n}\left(\frac{h}{\ell}\right)=-\cot\theta\nabla_{\alpha}\left(\frac{h}{\ell}\right),
\end{equation}
where $\alpha=1, 2, \cdots, n-1$.
\end{proposition}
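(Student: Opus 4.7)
The plan is to derive both identities directly from the capillary boundary condition $\nabla_\mu h = \cot\theta\, h$ and the intrinsic geometry of $\partial\mathcal{C}_\theta$ in $\mathcal{C}_\theta$. The key preliminary fact I need is the connection formula $\nabla_{e_\alpha} e_n = \cot\theta\, e_\alpha$ on $\partial\mathcal{C}_\theta$ for $\alpha=1,\dots,n-1$. This reflects the fact that $\mathcal{C}_\theta$ is intrinsically a spherical cap and $\partial\mathcal{C}_\theta$ is a latitude-type $(n-1)$-sphere sitting inside it as a totally umbilic submanifold. I would verify this by writing the induced metric on $\mathcal{C}_\theta$ in warped-product form $d\phi^2+\sin^2\phi\,g_{\mathbb{S}^{n-1}}$ (with $\phi$ the intrinsic distance from the apex, so $\partial\mathcal{C}_\theta=\{\phi=\theta\}$ and $\mu=\partial_\phi$), choosing an orthonormal frame $e_\alpha=(1/\sin\phi)\hat e_\alpha$ on the $\mathbb{S}^{n-1}$ factor, and computing the Christoffel data to read off $\nabla_{e_\alpha} e_n=\cot\phi\, e_\alpha$ at $\phi=\theta$.

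With this in hand, the first identity \eqref{prop-2-eq1} follows from a one-line Hessian computation. Since $e_n(h)=\nabla_\mu h=\cot\theta\, h$ on $\partial\mathcal{C}_\theta$, differentiating along the boundary in a tangential direction $e_\alpha$ gives $e_\alpha(e_n(h))=\cot\theta\, e_\alpha(h)$ pointwise on $\partial\mathcal{C}_\theta$. Plugging into
\[
\nabla^{2}_{\alpha n}h=e_\alpha(e_n(h))-(\nabla_{e_\alpha}e_n)(h)=\cot\theta\, e_\alpha(h)-\cot\theta\, e_\alpha(h)=0,
\]
we get \eqref{prop-2-eq1}.

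For the second identity \eqref{prop-2-eq2}, the clean way is to first note that $\ell$ satisfies the same capillary boundary condition $\nabla_\mu\ell=\cot\theta\,\ell$ on $\partial\mathcal{C}_\theta$, so by the product/quotient rule the function $u:=h/\ell$ has vanishing co-normal derivative, $\nabla_\mu u=0$, on $\partial\mathcal{C}_\theta$. Thus $e_n(u)\equiv 0$ along $\partial\mathcal{C}_\theta$, and in particular its tangential derivative vanishes: $e_\alpha(e_n(u))=0$ on $\partial\mathcal{C}_\theta$. Applying the same Hessian formula,
\[
\nabla^{2}_{\alpha n}u=e_\alpha(e_n(u))-(\nabla_{e_\alpha}e_n)(u)=0-\cot\theta\, e_\alpha(u)=-\cot\theta\,\nabla_\alpha u,
\]
which is exactly \eqref{prop-2-eq2}.

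The only mildly non-trivial step is the first one, establishing $\nabla_{e_\alpha}e_n=\cot\theta\, e_\alpha$; everything else is an algebraic consequence of the Robin-type boundary conditions satisfied by $h$ and $\ell$. An alternative I would keep in reserve is to avoid warped coordinates and instead exploit the fact that $\partial\mathcal{C}_\theta$ lies in $\partial\mathbb{R}^{n+1}_+$ with $\mu$ making a constant angle with the ambient ground plane; the second fundamental form of $\partial\mathcal{C}_\theta\subset\mathcal{C}_\theta$ can then be extracted from the rigid motion that rotates $\mu$ as one moves along the latitude. Either route yields the same $\cot\theta$ factor, after which the two identities drop out of the Hessian decomposition formula as above.
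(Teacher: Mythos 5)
Your argument is correct. The paper itself does not prove this proposition but simply cites \cite[Proposition 2.8]{MWW}; the route you take — establishing $\nabla_{e_\alpha}e_n=\cot\theta\,e_\alpha$ on $\partial\mathcal{C}_\theta$ (i.e.\ that the boundary is umbilic in $\mathcal{C}_\theta$ with principal curvature $\cot\theta$) and then feeding the Robin conditions $\nabla_\mu h=\cot\theta\,h$, $\nabla_\mu\ell=\cot\theta\,\ell$ into the Hessian decomposition $\nabla^2_{\alpha n}=e_\alpha e_n-(\nabla_{e_\alpha}e_n)$ — is the standard argument for this fact and is what the cited reference uses. Both the warped-product computation of the connection coefficient and the two one-line Hessian calculations check out.
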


For convenience, when dealing with the tensors and their covariant derivatives on the spherical cap $\mathcal{C}_{\theta}$, we will use a local frame to express tensors with the help of their
components and indices appearing after the semi-colon denote the covariant derivatives. Choosing a local orthonormal frame $\{e_{i}\}_{i=1}^{n}$ on $\mathcal{C}_{\theta}$, we use the notations $h_{i}=\nabla h(e_{i})$,
$h_{ij}=\nabla^{2}h(e_{i}, e_{j})$ for the covariant derivatives of $h$, and $W_{ij;k}:=\nabla_{e_{k}}(W_{ij})$ etc. For the standard metric on spherical cap $\mathcal{C}_{\theta}$, we have the commutator formulate
\begin{equation}\label{ricci-eq1}
h_{kij}=h_{ijk}+h_{k}\delta_{ij}-h_{j}\delta_{ki},
\end{equation}
and
\begin{equation}\label{ricci-eq2}
h_{klij}=h_{ijkl}+2h_{kl}\delta_{ij}-2h_{ij}\delta_{kl}+h_{li}\delta_{kj}-h_{kj}\delta_{il}.
\end{equation}

Using the capillary Gauss map parametrization, flow \eqref{flow-1} is transformed into a parabolic equation subject to a Robin boundary condition as follows (cf. \cite[Proposition 3.2]{MWWgf}).

\begin{proposition}\label{prop-3}
Let $\theta\in(0,\frac{\pi}{2})$ and $\Sigma_{t}$ be the solution to \eqref{flow-1}, then the capillary support function $h(\xi,t)$ of $\Sigma_{t}$ satisfies the following equations:
\begin{equation}\label{flow-2}
\left\{
\begin{aligned}
&\partial_{t} h=-fhK\frac{1}{\phi(\xi,\frac{h}{\ell})}+h, \qquad
&&~\mathrm{in}~
\mathcal{C}_{\theta}\times[0,\infty),\\
&\nabla_{\mu}h=\cot\theta h, \qquad&&~\mathrm{on}~ \partial \mathcal{C}_{\theta}\times[0,\infty),\\
&h(\xi,0)=h_{0}(\xi), \qquad &&~\mathrm{in}~\mathcal{C}_{\theta},
\end{aligned}
\right.
\end{equation}
where $h_{0}$ is the capillary support function of $\Sigma_{0}$.
\end{proposition}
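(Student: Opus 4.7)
The plan is to exploit the capillary Gauss map parametrization so that the normal $\nu(\xi)=\xi-\cos\theta\, e$ becomes a time-independent function of the parameter $\xi\in\mathcal{C}_{\theta}$, and then read off the evolution of $h(\xi,t)=\langle X(\xi,t),\nu(\xi)\rangle$ by differentiating in $t$. As is standard for curvature flows, tangential components of the velocity correspond to a time-dependent tangential reparametrization, so passing from the flow \eqref{flow-1} to the capillary Gauss map parametrization $X(\xi,t)=\tilde{\nu}_{t}^{-1}(\xi)$ does not alter the geometric evolution; only the normal part of $\partial_{t}X$ enters the equation for $h$. Concretely, I would compute
\begin{equation*}
\partial_{t}h(\xi,t)=\langle\partial_{t}X(\xi,t),\nu(\xi)\rangle,
\end{equation*}
since $\nu(\xi)$ has no $t$-dependence once we fix this parametrization.

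For the right-hand side, the crucial identity is $\langle\tilde{\nu},\nu\rangle=\langle\nu+\cos\theta\, e,\nu\rangle=1+\cos\theta\langle\nu,e\rangle=\ell$ by \eqref{def-capillary ell}. Plugging the flow equation into the inner product with $\nu$ gives
\begin{equation*}
\langle\partial_{t}X,\nu\rangle=-fK\,\frac{\langle X,\nu\rangle}{\phi\,\ell}\langle\tilde{\nu},\nu\rangle+\langle X,\nu\rangle=-\frac{fhK}{\phi(\xi,h/\ell)}+h,
\end{equation*}
which is exactly the parabolic equation for $h$ in \eqref{flow-2}. The initial condition $h(\xi,0)=h_{0}(\xi)$ is immediate from $\Sigma_{0}=X_{0}(\mathcal{C}_{\theta})$.

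For the boundary condition, the hypothesis $\tilde{\nu}\cdot e=0$ on $\partial\mathcal{C}_{\theta}\times[0,\infty)$ is precisely the statement that $\tilde{\nu}(\partial\Sigma_{t})=\partial\mathcal{C}_{\theta}$, i.e.\ the evolving hypersurfaces remain capillary with contact angle $\theta$ at each instant. By \cite[Lemma 2.4]{MWWXAF}, for any capillary hypersurface with contact angle $\theta$ the capillary support function satisfies $\nabla_{\mu}h=\cot\theta\, h$ on $\partial\mathcal{C}_{\theta}$. Applying this pointwise in $t$ gives the Robin boundary condition in \eqref{flow-2}. The only mild subtlety worth checking is that the reparametrization by $\tilde{\nu}_{t}$ is legitimate throughout the flow; this is guaranteed as long as $\Sigma_{t}$ remains strictly convex, which is the setting of the proposition, and in that regime $\tilde{\nu}_{t}\colon\Sigma_{t}\to\mathcal{C}_{\theta}$ is a diffeomorphism. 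Hence no genuine obstacle arises, and the proposition follows from this direct computation together with the cited boundary lemma.
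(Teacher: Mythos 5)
Your proposal is correct, and it reconstructs the standard argument underlying the cited result \cite[Proposition 3.2]{MWWgf}, which the paper itself does not reproduce: working in the capillary Gauss map parametrization so that $\nu(\xi)=\xi-\cos\theta\,e$ is time-independent, using $\langle\tilde\nu,\nu\rangle=\ell$ to collapse the normal part of the flow speed to $-fhK/\phi+h$, noting that the tangential part of $\partial_tX$ is absorbed by the reparametrization and drops out of $\langle\partial_tX,\nu\rangle$, and obtaining the Robin boundary condition from the constant contact-angle condition $\tilde\nu\cdot e=0$ via \cite[Lemma 2.4]{MWWXAF}. No gaps; this is the intended derivation.
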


\section{Long-time existence of the flow}  \label{S3}

In this section, we will give a priori estimates about capillary support function and obtain the long-time existence of the flow \eqref{flow-1}.

We begin with the estimate of the upper and the lower bounds of $h$.

\begin{lemma}\label{lemmaC0}
Let $\theta\in(0,\frac{\pi}{2})$. Suppose $h$ is a smooth, strictly convex solution to flow \eqref{flow-2} on $\mathcal{C}_{\theta}\times[0,T)$, and $f$, $\phi$ are smooth, positive functions satisfying \eqref{condition1}. Then
\begin{equation*}
\frac{1}{C}\leq h(\xi,t) \leq C,
\end{equation*}
where $C$ is a positive constant depending on $n$, $\theta$, $\min_{\mathcal{C}_{\theta}}f$, $\max_{\mathcal{C}_{\theta}}f$, $\Sigma_{0}$, $\limsup\limits_{s\rightarrow \infty}[\phi(\xi,s)s^{n}]$ and
$\liminf\limits_{s\rightarrow 0^{+}}[\phi(\xi,s)s^{n}]$.
\end{lemma}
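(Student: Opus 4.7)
The plan is to bound the ratio $w:=h/\ell$ rather than $h$ itself. This substitution has three advantages. First, $w$ is exactly the second argument of $\phi$ in the flow equation of Proposition~\ref{prop-3}, so hypothesis \eqref{condition1} is naturally phrased in terms of $w$. Second, since both $h$ and $\ell$ satisfy $\nabla_\mu(\cdot)=\cot\theta\,(\cdot)$ on $\partial\mathcal{C}_\theta$, one has $\nabla_\mu w=0$, so the Robin problem becomes a homogeneous Neumann problem amenable to the classical parabolic maximum principle. Third, since $\ell=\sin^2\theta+\cos\theta\langle\xi,e\rangle$ is pinched between $1-\cos\theta$ and $\sin^2\theta$ on $\mathcal{C}_\theta$, a positive two-sided bound on $w$ converts directly to one on $h$.

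Starting from \eqref{flow-2} and using that $\ell$ is time-independent, I would first derive
\begin{equation*}
\partial_t w \;=\; w\left(1-\frac{fK}{\phi(\xi,w)}\right).
\end{equation*}
Let $(\xi_0,t_0)$ be a point where $w(\cdot,t_0)$ attains its spatial maximum. Whether $\xi_0$ lies in the interior of $\mathcal{C}_\theta$ or on $\partial\mathcal{C}_\theta$, the condition $\nabla_\mu w=0$ together with a Taylor expansion into $\mathcal{C}_\theta$ yields $\nabla w(\xi_0)=0$ and $\nabla^2 w(\xi_0)\le 0$. Differentiating $h=\ell w$ twice and using $w_i(\xi_0)=0$, I get
\begin{equation*}
h_{ij}+h\delta_{ij}\;=\;(\ell_{ij}+\ell\delta_{ij})\,w+\ell\, w_{ij}\;=\;w\,\delta_{ij}+\ell\, w_{ij}\;\le\; w\,\delta_{ij},
\end{equation*}
where I have used the spherical-cap identity $\ell_{ij}+\ell\delta_{ij}=\delta_{ij}$, valid because $\mathcal{C}_\theta$ is a piece of the unit sphere and $\ell$ is its own capillary support function, so its second fundamental form is the identity by Proposition~\ref{prop-1}. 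Therefore $\det(h_{ij}+h\delta_{ij})\le w^n$, i.e. $K\ge 1/w^n$ at $\xi_0$.

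Feeding this into the evolution equation and using $\partial_t w(\xi_0,t_0)\ge 0$ at a max gives $\phi(\xi_0,w(\xi_0,t_0))\cdot w(\xi_0,t_0)^n\ge f(\xi_0)$. The upper half of \eqref{condition1}, combined with the smoothness of $\phi,f$ and compactness of $\mathcal{C}_\theta$, produces a uniform constant $C_1$ such that $\phi(\xi,s)s^n<f(\xi)$ for every $s\ge C_1$ and every $\xi\in\mathcal{C}_\theta$. This forces $\max w(\cdot,t)\le\max\{C_1,\max w(\cdot,0)\}$. A symmetric argument at a spatial minimum of $w$, where $\nabla^2 w\ge 0$ instead forces $K\le 1/w^n$, combined with the lower half of \eqref{condition1}, yields a strictly positive lower bound on $\min w(\cdot,t)$. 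Converting these two-sided bounds back to $h$ via $1-\cos\theta\le\ell\le\sin^2\theta$ completes the argument.

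The main obstacle I anticipate is the boundary: the inhomogeneous Robin condition on $h$ obstructs a direct maximum-principle calculation, and the coupling between $h$ and the nonlinearity $\phi(\xi,h/\ell)$ is what makes \eqref{condition1} the right hypothesis in the first place. Both difficulties dissolve once the change of unknown from $h$ to $w=h/\ell$ is made: the boundary condition becomes homogeneous Neumann, and the identity $\ell_{ij}+\ell\delta_{ij}=\delta_{ij}$ exactly cancels the first-order interaction between $w$ and $\ell$ at a critical point, reducing the estimate to a clean pointwise comparison of $\phi(\xi,w)w^n$ with $f$.
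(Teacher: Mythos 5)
Your overall strategy---passing to $w=h/\ell$, exploiting the homogeneous Neumann condition $\nabla_\mu w=0$, deriving $b_{ij}=w\,\delta_{ij}+\ell\,w_{ij}$ at a critical point so that $K$ compares with $w^{-n}$, and then closing via the two thresholds in \eqref{condition1}---is exactly the route the paper takes (you start from the maximum of $w$, the paper from the minimum; that is cosmetic).

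There is, however, one step where your write-up skips a real ingredient. At a boundary extremum $\xi_0\in\partial\mathcal{C}_\theta$, you claim that $\nabla_\mu w=0$ ``together with a Taylor expansion into $\mathcal{C}_\theta$'' yields $\nabla^2 w(\xi_0)\le 0$. A one-sided Taylor expansion along curves entering $\mathcal{C}_\theta$ controls the tangential block $w_{\alpha\beta}$ and the normal entry $w_{nn}$, but it does \emph{not} control the mixed entries $w_{\alpha n}$; a boundary extremum alone does not force those to vanish, and without that the Hessian need not be semi-definite (a matrix can have nonpositive tangential block and nonpositive $nn$-entry yet fail to be $\le 0$ if the off-diagonal entries are nonzero). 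What closes the argument is the capillary boundary identity of Proposition~\ref{prop-2}, specifically \eqref{prop-2-eq2}: $\nabla^2_{\alpha n}(h/\ell)=-\cot\theta\,\nabla_\alpha(h/\ell)$, which, once you know $\nabla w(\xi_0)=0$, gives $w_{\alpha n}(\xi_0)=0$ and makes the Hessian block-diagonal. You should invoke this rather than a Taylor argument. A second, smaller phrasing issue: ``$\partial_t w(\xi_0,t_0)\ge 0$ at a max'' is not automatic for a purely spatial maximum; the correct formulation is the usual comparison argument---if the running maximum $\max_\xi w(\cdot,t)$ ever exceeded $\max\{C_1,\,\max w(\cdot,0)\}$, then at the first such time the evolution equation together with $K\ge w^{-n}$ forces the spatial maximum to be strictly decreasing, a contradiction. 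With these two repairs your proposal coincides with the paper's proof.
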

\begin{proof}
From the evolution equation \eqref{flow-2}, a direct computation gives
\begin{equation}\label{C0-eq1}
\partial_{t}\left(\frac{h}{\ell}\right)=\frac{h}{\ell}\left(-f\sigma_{n}^{-1}\frac{1}{\phi(\xi,\frac{h}{\ell})}+1 \right),
\end{equation}
and
\begin{equation}\label{C0-bd1}
\nabla_{\mu}\left(\frac{h}{\ell}\right)=0,\quad\mathrm{on}~\partial \mathcal{C}_{\theta},
\end{equation}
where we have used the fact that $\nabla_{\mu}\ell=\cot\theta\ell$ on $\partial \mathcal{C}_{\theta}$.

We claim that for any fixed $t>0$, if the function $\frac{h}{\ell}(\cdot,t)$ attains its minimum at a point $\xi_{0}\in\mathcal{C}_{\theta}$, then at this point,
\begin{equation}\label{C0-claim1}
\nabla\left(\frac{h}{\ell}\right)=0, \quad \nabla^{2}\left(\frac{h}{\ell}\right)\geq0.
\end{equation}
If $\xi_{0}\in\mathcal{C}_{\theta} \backslash \partial\mathcal{C}_{\theta}$, then the claim is obvious. If $\xi_{0}\in \partial\mathcal{C}_{\theta}$, let $\{e_{i}\}_{i=1}^{n}$ be the orthonormal frame around $\xi_{0}$
such that $e_{n}=\mu$. For any $e_{\alpha}\in T_{\xi_{0}}\partial \mathcal{C}_{\theta}$, we have
\begin{equation*}
\nabla_{\alpha}\left(\frac{h}{\ell}(\xi_{0},t)\right)=0\quad\mathrm{for}\quad 1\leq \alpha\leq n-1.
\end{equation*}
Then the boundary value condition \eqref{C0-bd1} implies that $\nabla\left(\frac{h}{\ell}\right)=0$ at $\xi_{0}$. The minimality of $\xi_{0}$, implies that
\begin{equation*}
\nabla^{2}_{\alpha\beta}\left(\frac{h}{\ell}(\xi_{0},t)\right)\geq0\quad\mathrm{for}\quad 1\leq \alpha, \beta \leq n-1.
\end{equation*}
From \eqref{prop-2-eq2} and the condition $\nabla \left(\frac{h}{\ell}\right)=0$, we get
\begin{equation*}
\nabla^{2}\left(\frac{h}{\ell}(\xi_{0},t)\right)(e_{\alpha},e_{n})=-\cot\theta\nabla_{\alpha} \left(\frac{h}{\ell}(\xi_{0},t)\right)=0\quad\mathrm{for}\quad 1\leq \alpha \leq n-1.
\end{equation*}
Let $\gamma(s)$ be the geodesic in $\mathcal{C}_{\theta}$ starting from $\gamma(0)=\xi_{0}$ with $\gamma'(0)=-e_{n}$ for $s\in[0,\epsilon]$ and $\epsilon(\epsilon<\theta)$ sufficiently small,
then the minimality of $\xi_{0}$ implies
\begin{equation*}
0\leq\frac{d^{2}}{ds^{2}}\bigg |_{s=0}\left(\frac{h}{\ell}(\gamma(s),t)\right)=\nabla^{2}\left(\frac{h}{\ell}(\gamma(0),t)\right)(\gamma'(0),\gamma'(0))+\langle\nabla\left(\frac{h}{\ell}(\gamma(0),t)\right),\gamma''(0)\rangle.
\end{equation*}
Together with the condition $\nabla \left(\frac{h}{\ell}\right)=0$, we conclude that
\begin{equation*}
\nabla^{2}\left(\frac{h}{\ell}(\xi_{0},t)\right)(e_{n},e_{n})\geq 0.
\end{equation*}
Consequently, \eqref{C0-claim1} also hold in the case $\xi_{0}\in\partial\mathcal{C}_{\theta}$.

At the point $\xi_{0}$,
\begin{equation*}
\begin{split}
0\leq \nabla^{2}_{ij}\frac{h}{\ell}
&=\frac{\nabla^{2}_{ij}h}{\ell}+\frac{2h}{\ell^{3}}\nabla_{i}\ell\nabla_{j}\ell-\frac{\nabla^{2}_{ij}\ell}{\ell^{2}}h-\frac{\nabla_{j}h\nabla_{i}\ell}{\ell^{2}}-\frac{\nabla_{i}h\nabla_{j}\ell}{\ell^{2}}\\
&=\frac{b_{ij}}{\ell}-\frac{1}{\ell}\left(\nabla_{i}\frac{h}{\ell}\nabla_{j}\ell+\nabla_{j}\frac{h}{\ell}\nabla_{i}\ell\right)-\frac{h}{\ell^{2}}\delta_{ij}\\
&=\frac{b_{ij}}{\ell}-\frac{h}{\ell^{2}}\delta_{ij},
\end{split}
\end{equation*}
where we used $\nabla \left(\frac{h}{\ell}\right)=0$ at $\xi_{0}$ and $\nabla^{2}_{ij}\ell+\ell \delta_{ij}=\delta_{ij}$. Multiplying both sides of the above inequality by $\ell>0$ yields
\begin{equation*}
b_{ij}\geq\frac{h}{\ell}\delta_{ij}.
\end{equation*}
Since $\sigma_{n}$ is monotonically increasing in its argument, it follows that
\begin{equation}\label{C0-ineq1}
\sigma_{n}(b_{ij})\geq\sigma_{n}\left(\frac{h}{\ell}\delta_{ij}\right)=\left(\frac{h}{\ell}\right)^{n}.
\end{equation}
Put $u(t)=\min_{\mathcal{C}_{\theta}}(\frac{h}{\ell})(\cdot,t)=(\frac{h}{\ell})(\xi_{0},t)$, substituting \eqref{C0-ineq1} into \eqref{C0-eq1}, we find
\begin{equation*}
\frac{\partial u}{\partial t}\geq u(-fu^{-n}\frac{1}{\phi(\xi_{0},u)}+1).
\end{equation*}
Let $\underline{A}=\liminf\limits_{s\rightarrow 0^{+}}[\phi(\xi,s)s^{n}]$. By condition \eqref{condition1}, $\varepsilon=\frac{1}{2}(\underline{A}-\max_{\mathcal{C}_{\theta}}f(\xi))$ is positive. Then, there exists a positive constant
$C_{1}$ depending on $n$, $\theta$, $\max_{\mathcal{C}_{\theta}}f(\xi)$ and $\liminf\limits_{s\rightarrow 0^{+}}[\phi(\xi,s)s^{n}]$, such that
\begin{equation*}
u^{n}\phi(\xi_{0},u)>\underline{A}-\varepsilon
\end{equation*}
for $u<C_{1}$. It follows by \eqref{condition1}
\begin{equation*}
u^{n}\phi(\xi_{0},u)-f(\xi)>\underline{A}-\varepsilon-\max_{\mathcal{C}_{\theta}}f(\xi)>0,
\end{equation*}
which shows that
\begin{equation*}
\frac{\partial u}{\partial t}>0
\end{equation*}
at minimal points. Hence
\begin{equation*}
u\geq \min\{C_{1},\min_{\mathcal{C}_{\theta}}u(\xi,0)\}.
\end{equation*}
Similarly, we can show that
\begin{equation*}
u\leq \max\{C_{2},\max_{\mathcal{C}_{\theta}}u(\xi,0)\},
\end{equation*}
where $C_{2}$ is a positive constant depending on $n$, $\theta$, $\min_{\mathcal{C}_{\theta}}f(\xi)$ and $\limsup\limits_{s\rightarrow \infty}[\phi(\xi,s)s^{n}]$.

Recall that $\ell$ is bounded from above and below by a positive constant $C$ depending only on $\theta$. Thus, the bounds on $u$ give the bounds on $h$, this completes the proof.
\end{proof}

\begin{lemma}\label{lemmaC1}
Let $\theta\in(0,\frac{\pi}{2})$. Suppose $h$ is a smooth, strictly convex solution to flow \eqref{flow-2} on $\mathcal{C}_{\theta}\times[0,T)$, and $f$, $\phi$ are smooth, positive functions satisfying \eqref{condition1}, then
\begin{equation}\label{lemmaC1-ieq1}
|\nabla h(\xi,t)| \leq C, \quad \forall (\xi,t)\in \mathcal{C}_{\theta}\times[0,T),
\end{equation}
where $C$ is a positive constant depending on $n$, $\theta$, $\min_{\mathcal{C}_{\theta}}f$, $\max_{\mathcal{C}_{\theta}}f$, $\Sigma_{0}$, $\limsup\limits_{s\rightarrow \infty}[\phi(\xi,s)s^{n}]$ and
$\liminf\limits_{s\rightarrow 0^{+}}[\phi(\xi,s)s^{n}]$.
\end{lemma}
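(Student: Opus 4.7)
The plan is to reduce the gradient bound to a uniform diameter bound for $\Sigma_t$ via a classical support-function identity. The translation $T(y)=y+\cos\theta\,e$ from $\mathbb{S}^n_\theta$ to $\mathcal{C}_\theta$ is an isometry of the induced spherical metrics, so the usual parametrization $X=h\nu+\nabla^{\mathbb{S}^n}h$ on the sphere transfers to
\begin{equation*}
X(\xi)=h(\xi)\bigl(\xi-\cos\theta\,e\bigr)+\nabla h(\xi),\qquad \xi\in\mathcal{C}_\theta,
\end{equation*}
with $\nabla h$ the covariant derivative on $\mathcal{C}_\theta$ in the spherical metric. Since $|\xi-\cos\theta e|=1$ by definition of $\mathcal{C}_\theta$ and $\nabla h(\xi)$ is tangent to $\mathcal{C}_\theta$ at $\xi$, hence orthogonal to the radial direction $\xi-\cos\theta e$, taking norms yields the pointwise identity
\begin{equation*}
|X(\xi,t)|^{2}=h(\xi,t)^{2}+|\nabla h(\xi,t)|^{2}.
\end{equation*}
Thus \eqref{lemmaC1-ieq1} follows once a uniform upper bound on $|X|$ is established.

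To bound $|X|$, I would sandwich $\widehat{\Sigma}_t$ between capillary spherical caps using Lemma \ref{lemmaC0}. Since $\theta<\frac{\pi}{2}$, the function $\ell$ is bounded below on $\mathcal{C}_\theta$ by the positive constant $1-\cos\theta$, so together with the upper bound $h\leq C$ from Lemma \ref{lemmaC0} one has $h(\xi,t)\leq R\,\ell(\xi)$ throughout $\mathcal{C}_\theta\times[0,T)$ for $R=C/(1-\cos\theta)$. Because $R\ell$ is precisely the capillary support function of $\mathcal{C}_{\theta,R}$, the half-space representation
\begin{equation*}
\widehat{\Sigma}_t=\bigl\{x\in\overline{\mathbb{R}_{+}^{n+1}}:\langle x,\xi-\cos\theta e\rangle\leq h(\xi,t)\text{ for all }\xi\in\mathcal{C}_\theta\bigr\}
\end{equation*}
of a capillary convex body immediately yields the inclusion $\widehat{\Sigma}_t\subset\widehat{\mathcal{C}_{\theta,R}}$, and every point of the latter satisfies $|x|\leq R(1+\cos\theta)$. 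Combined with the identity above, this delivers the claimed bound on $|\nabla h|$.

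The main step to verify is that the classical sphere support-function identity and the inclusion principle for capillary convex bodies transfer to the present setting; both are purely geometric and hold uniformly up to $\partial\mathcal{C}_\theta$, so no Hopf-type or Robin-boundary argument is required for this estimate. The boundary condition $\nabla_\mu h=\cot\theta\,h$ enters only implicitly, through the fact that $\widehat{\Sigma}_t\in\mathcal{K}_\theta$ is preserved along the flow, which is exactly what validates the half-space representation used in the sandwich argument.
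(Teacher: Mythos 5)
Your proof is correct but follows a genuinely different and more geometric route than the paper. The paper applies the maximum principle to $P:=|\nabla h|^{2}+h^{2}$: at an interior maximum, strict convexity ($\nabla^{2}h+h\sigma>0$) forces $\nabla h=0$; at a boundary maximum, the tangential derivatives vanish via Proposition \ref{prop-2} and the Robin condition $\nabla_{\mu}h=\cot\theta\,h$ gives $P\leq(1+\cot^{2}\theta)h^{2}$, which is then controlled by Lemma \ref{lemmaC0}. You instead observe that $P$ is nothing but $|X|^{2}$ (the squared distance of the hypersurface from the origin), so that the $C^{1}$ bound reduces to a uniform outer barrier, which you produce by sandwiching $\widehat{\Sigma}_{t}$ inside the capillary cap $\widehat{\mathcal{C}_{\theta,R}}$ via the pointwise bound $h\leq R\ell$. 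This avoids any boundary-derivative calculation and is conceptually cleaner. The one step that deserves more care is the assertion that $h\leq R\ell$ ``immediately'' gives $\widehat{\Sigma}_{t}\subset\widehat{\mathcal{C}_{\theta,R}}$: the half-space representation of a capillary convex body you invoke uses only normals $\nu\in\mathbb{S}^{n}_{\theta}$ together with the constraint $x_{n+1}\geq0$, and one must check that the supporting half-spaces whose normals lie in the ``wedge'' between $\partial\mathbb{S}^{n}_{\theta}$ and $e$ (which arise at the corner circle $\partial\Sigma_{t}$) impose no additional constraints. This is indeed true — writing any such $\nu^{*}$ as $\nu^{*}=\lambda\tilde\nu+\mu e$ with $\tilde\nu\in\partial\mathbb{S}^{n}_{\theta}$, $\lambda\geq0$, $\mu>0$, and using that the supporting hyperplanes with normals $\tilde\nu$ and $e$ meet at the corner (so the support function is additive there: $h_{K}(\nu^{*})=\lambda h_{K}(\tilde\nu)+\mu h_{K}(e)$), together with $\langle y,e\rangle\leq0$ for $y$ in the upper half-space, yields the claim — but it is not entirely immediate and should be spelled out or supported by a reference. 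With that filled in, your proof is complete and delivers the same dependence of the constant as the paper's.
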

\begin{proof}
Define the function
\begin{equation*}
P:=|\nabla h|^{2}+h^{2}.
\end{equation*}
For any fixed $t\in[0,T)$, suppose that $P$ attains its maximum value at some point, say $\xi_{0}\in\mathcal{C}_{\theta}$.

\textbf{Case $1$}. If $\xi_{0}\in\mathcal{C}_{\theta} \backslash \partial\mathcal{C}_{\theta}$, by the maximal condition,
\begin{equation*}
0=\nabla_{e_{i}}P=2\sum\limits_{k=1}^{n}h_{k}h_{ki}+2hh_{i},\quad \mathrm{for}~1\leq i\leq n,
\end{equation*}
together with the fact that $\nabla^{2}h+h\sigma>0$, we obtain $\nabla h(\xi_{0},t)=0$. According to Lemma \ref{lemmaC0}, \eqref{lemmaC1-ieq1} holds in the case.

\textbf{Case $2$}. If $\xi_{0}\in\partial\mathcal{C}_{\theta}$, we choose an orthonormal frame $\{e_{\alpha}\}_{\alpha=1}^{n-1}$ of $\mathcal{C}_{\theta}$ such that $\{(e_{\alpha})_{\alpha=1}^{n-1},e_{n}=\mu\}$ forms an
orthonormal frame of $\mathcal{C}_{\theta}$. From Propostion \ref{prop-2}, we know that $h_{n\alpha}(\xi_{0},t)=0$. Using the maximal condition again, we have
\begin{equation*}
0=\nabla_{e_{\alpha}}P=2\sum\limits_{i=1}^{n}h_{i}h_{i\alpha}+2hh_{\alpha},
\end{equation*}
which implies
\begin{equation*}
(\nabla_{e_{\alpha}}h)(\xi_{0},t)=0.
\end{equation*}
Combining this with the boundary condition $\nabla_{\mu}h=\cot\theta h$, we obtain
\begin{equation*}
P\leq h_{n}^{2}(\xi_{0},t)+h^{2}(\xi_{0},t)=(1+\cot^{2}\theta)h^{2}(\xi_{0},t).
\end{equation*}
According to Lemma \ref{lemmaC0}, we know that \eqref{lemmaC1-ieq1} holds and the proof is complete.
\end{proof}

Next we will establish the uniform upper and lower bounds for the principal curvatures of the flow \eqref{flow-2}. We first derive lower and upper bounds on the Gauss curvature and then obtain the upper bound of the principal radii of curvature.

By Lemmas \ref{lemmaC0} and \ref{lemmaC1}, if $h$ is a smooth, strictly convex solution of \eqref{flow-2} on $\mathcal{C}_{\theta}\times[0,T)$, and $f$, $\phi$ are smooth, positive functions satisfying \eqref{condition1}, then along the flow for $[0,T)$,
$||h||_{C^{0}(\mathcal{C}_{\theta}\times[0,T))}$ and $||h||_{C^{1}(\mathcal{C}_{\theta}\times[0,T))}$ are bounded by positive constants independent of $t$. Moreover, since $\ell$ is bounded by a constant depending only on $\theta$, $h/\ell$ is a smooth function whose range is contained in a bounded interval $I_{[0,T)}$. Here $I_{[0,T)}$ depends only on $\theta$, the upper and lower bounds of $h$ on $[0,T)$, and is independent of $t$.

\begin{lemma}\label{lemmaC2-1}
Let $\theta\in(0,\frac{\pi}{2})$. Suppose $h$ is a smooth, strictly convex solution to flow \eqref{flow-2} on $\mathcal{C}_{\theta}\times[0,T)$, and $f$, $\phi$ are smooth, positive functions satisfying \eqref{condition1}, then
\begin{equation*}
K(\xi,t)\leq C,
\end{equation*}
where $C$ is a positive constant depending on $n$, $\theta$, $\min_{\mathcal{C}_{\theta}}f$, $\max_{\mathcal{C}_{\theta}}f$, $||h||_{C^{0}(\mathcal{C}_{\theta}\times[0,T))}$,
$||\phi||_{C^{0}(\mathcal{C}_{\theta}\times I_{[0,T)})}$, $||\phi||_{C^{1}(\mathcal{C}_{\theta}\times I_{[0,T)})}$ and $\Sigma_{0}$.
\end{lemma}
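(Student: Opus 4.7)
By Lemmas \ref{lemmaC0} and \ref{lemmaC1}, $h$ and $|\nabla h|$ are uniformly bounded on $[0,T)$, and $h/\ell$ takes values in a compact interval on which $\phi$ and its derivatives are controlled. Since $fhK/\phi = h - \partial_{t}h$, bounding $K$ from above is equivalent to bounding $h-\partial_{t}h$ from above. The plan is to apply the maximum principle to the auxiliary function
\[
\Theta(\xi,t) \;:=\; \log\!\left(\frac{fK}{\phi}\right) \;=\; \log(h-\partial_{t}h) - \log h .
\]
The choice of $\Theta$ is dictated by its boundary behavior: time-differentiating the Robin condition $\nabla_{\mu}h = \cot\theta\,h$ yields $\nabla_{\mu}(\partial_{t}h) = \cot\theta\,\partial_{t}h$, hence $\nabla_{\mu}(h-\partial_{t}h) = \cot\theta\,(h-\partial_{t}h)$, so that
\[
\nabla_{\mu}\Theta \;=\; \cot\theta - \cot\theta \;=\; 0 \qquad \text{on}\ \partial\mathcal{C}_{\theta}.
\]
This homogeneous Neumann condition is what makes the capillary boundary tractable for the maximum principle (via Hopf).

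The main computation is the evolution of $\Theta$. Differentiating \eqref{flow-2} in $t$, using $\partial_{t}K = -K(b^{-1})^{ij}\partial_{t}b_{ij}$ with $b_{ij}=h_{ij}+h\delta_{ij}$ and $\partial_{t}b_{ij}=\nabla^{2}_{ij}\partial_{t}h+\partial_{t}h\,\delta_{ij}$, and keeping track of the dependence of $\phi(\xi, h/\ell)$ on $h$, one arrives at an equation of the form
\[
\partial_{t}\Theta \;=\; \mathcal{L}\Theta \;+\; \mathcal{R},
\]
where $\mathcal{L}u := \tfrac{fhK}{\phi}(b^{-1})^{ij}\nabla^{2}_{ij}u$ is the parabolic linearization and $\mathcal{R}$ collects lower-order terms depending on $h$, $\nabla h$, $\phi$, $\nabla\phi$ and $\partial_{s}\phi$. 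Crucially, the expansion produces the structural term $-\tfrac{fhK}{\phi}(b^{-1})^{ii}$ (times a positive constant from the linearization), which provides damping when $K$ is large, since AM--GM gives $(b^{-1})^{ii}\geq n K^{1/n}$.

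To conclude, fix $t\in[0,T)$ and suppose $\Theta(\cdot,t)$ attains its maximum at $\xi^{*}\in\overline{\mathcal{C}_{\theta}}$. If $\xi^{*}\in\partial\mathcal{C}_{\theta}$, the Hopf boundary point lemma combined with $\nabla_{\mu}\Theta=0$ forces $\xi^{*}$ to be an interior maximum (the argument mirrors the boundary reduction in the proof of Lemma \ref{lemmaC0}, using the geodesic test along $-\mu$ together with the vanishing of $\nabla_{\alpha n}^{2}\Theta$ following from \eqref{prop-2-eq2}). At an interior maximum we have $\nabla\Theta=0$, $\nabla^{2}\Theta\leq 0$, so $\mathcal{L}\Theta\leq 0$, and from $\partial_{t}\Theta\geq 0$ one obtains the pointwise inequality
\[
0 \;\leq\; C_{1} \;-\; C_{2}\,K^{1/n} e^{\Theta/n} ,
\]
with $C_{1},C_{2}>0$ depending only on the quantities listed in the statement. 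This forces $\Theta \leq C$ at the maximum, hence $K\leq C$ on $\mathcal{C}_{\theta}\times[0,T)$.

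The main obstacle will be the bookkeeping in the evolution of $\Theta$: the dependence of $\phi$ on $h/\ell$ introduces gradient-type error terms that must be absorbed using the uniform $C^{1}$-bound on $h$, and one must identify a damping term strong enough to dominate these errors for large $K$. A secondary difficulty is the rigorous boundary reduction showing that the maximum of $\Theta$ can be taken in the interior; here the identity $\nabla_{\mu}\Theta=0$ together with \eqref{prop-2-eq1}--\eqref{prop-2-eq2} makes Hopf's argument go through cleanly.
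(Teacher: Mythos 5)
The boundary reduction in your plan is sound: time-differentiating the Robin condition gives $\nabla_{\mu}(\partial_{t}h)=\cot\theta\,\partial_{t}h$, hence $\nabla_{\mu}\Theta=0$ on $\partial\mathcal{C}_{\theta}$, and at a boundary maximum one then gets $\nabla\Theta=0$ and $\nabla^{2}\Theta\leq 0$ just as at an interior one. (Your citation of \eqref{prop-2-eq2} is off---that identity concerns $h/\ell$, not $\Theta$---but the vanishing of the cross-terms $\nabla^{2}_{\alpha\mu}\Theta$ follows directly from $\nabla_{\mu}\Theta=0$ combined with $\nabla\Theta(\xi^{*})=0$, so the reduction still works.) The gap is in the interior estimate.

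Write $e^{\Theta}=fK/\phi$ and $G:=he^{\Theta}$, so that $\partial_{t}h=h-G$ and $h_{tij}+h_{t}\delta_{ij}=b_{ij}-(G_{ij}+G\delta_{ij})$. Then
\begin{equation*}
\partial_{t}\log K=-b^{ij}\bigl(h_{tij}+h_{t}\delta_{ij}\bigr)=-n+b^{ij}G_{ij}+G\sum_{i}b^{ii},
\end{equation*}
and the $+G\sum_{i}b^{ii}$ you would like to use as damping is exactly cancelled by the $-he^{\Theta}\sum_{i}b^{ii}$ contained in $b^{ij}G_{ij}$: expanding $G_{ij}+G\delta_{ij}=e^{\Theta}\bigl[b_{ij}+2h_{(i}\Theta_{j)}+h\Theta_{i}\Theta_{j}+h\Theta_{ij}\bigr]$ and evaluating at the maximum, where $\nabla\Theta=0$, $\nabla^{2}\Theta\leq 0$, gives
\begin{equation*}
\partial_{t}\Theta\leq\Bigl(n+\frac{h\,\partial_{u}\phi}{\phi\,\ell}\Bigr)\bigl(e^{\Theta}-1\bigr),
\end{equation*}
with no $\sum_{i}b^{ii}$ term at all. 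The structural term $-\tfrac{fhK}{\phi}(b^{-1})^{ii}$ you claim does not survive this cancellation, and the final inequality $0\leq C_{1}-C_{2}K^{1/n}e^{\Theta/n}$ does not follow; the bound above is nonnegative whenever $e^{\Theta}\geq 1$ (and the sign of $\partial_{u}\phi$ is not controlled by \eqref{condition1}), so the argument never closes.

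The paper avoids this by using $Q=\tfrac{-h_{t}}{h-\varepsilon_{0}}$ with $\varepsilon_{0}:=\tfrac{1}{2}\min h>0$. The shift by $\varepsilon_{0}$ in the denominator is precisely what restores the damping: at the maximum one gets $-h_{tii}-h_{t}\leq Q(b_{ii}-\varepsilon_{0})$, and tracing against $\sigma_{n}^{ii}$ yields $K_{t}/K\leq nQ(1-\varepsilon_{0}K^{1/n})$, where the $-n\varepsilon_{0}QK^{1/n}$ term dominates for large $K$. (The same $\varepsilon_{0}$ also makes $\nabla_{\mu}Q<0$ on the boundary, which is the paper's route around the boundary case.) If you want to retain a logarithmic test function, you need to add a penalization such as $-N\log h$ with $N$ large---as the paper does for the lower bound of $K$ in Lemma \ref{lemmaC2-2}---or use the $\varepsilon_{0}$-shifted denominator; $\log(fK/\phi)$ by itself is not coercive enough to close the estimate.
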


\begin{proof}
For convenience, we denote $\phi(\xi,\frac{h}{\ell})^{-1}=\psi(\xi,\frac{h}{\ell})$ below (or simply $\psi$). Consider the following auxiliary function
\begin{equation}\label{lemmaC2-1eq1}
Q=\frac{fhK\psi-h}{h-\varepsilon_{0}}=\frac{-h_{t}}{h-\epsilon_{0}},
\end{equation}
where $h_{t}:=\partial_{t}h$ and
\begin{equation*}
\varepsilon_{0}:=\frac{1}{2}\min\limits_{\mathcal{C}_{\theta}\times[0,T)} h(\xi,t)>0.
\end{equation*}

Assume that $\max K\gg 1$, otherwise, the proof is complete. By Lemma \ref{lemmaC0}, the upper bound of $K(\xi,t)$ follows from that of $Q(\xi,t)$. Therefore, it suffices to bound $Q(\xi,t)$ from above.
In what follows, we also assume that $\max Q\gg 1$.

For any fixed $t\in[0,T)$, suppose the maximum of $Q(\xi,t)$ is attained at some point $\xi_{0}\in\mathcal{C}_{\theta}$. On $\partial\mathcal{C}_{\theta}$, we have
\begin{equation*}
\nabla_{\mu}Q=-\frac{h_{t\mu}}{h-\varepsilon_{0}}+\frac{h_{t}h_{\mu}}{(h-\varepsilon_{0})^{2}}=\frac{\varepsilon_{0}\cot\theta h_{t}}{(h-\varepsilon_{0})^{2}}
=-\frac{\varepsilon_{0}\cot\theta Q}{h-\varepsilon_{0}}<0.
\end{equation*}
Consequently, the maximum of $Q(\cdot,t)$ is attained at some point $\xi_{0}\in\mathcal{C}_{\theta} \backslash \partial\mathcal{C}_{\theta}$.

Choose a local orthonormal frame $\{e_{i}\}_{i=1}^{n}$ such that $\{b_{ij}\}$ is diagonal at the point $\xi_{0}$. Then, at the point $\xi_{0}$, we have
\begin{equation}\label{lemmaC2-1eq2}
0=\nabla_{i}Q=\frac{-h_{ti}}{h-\varepsilon_{0}}+\frac{h_{t}h_{i}}{(h-\varepsilon_{0})^{2}},
\end{equation}

\begin{equation}\label{lemmaC2-1ieq3}
\begin{split}
0\geq\nabla^{2}_{ii}Q
=&\frac{-h_{tii}}{h-\varepsilon_{0}}+\frac{h_{t}h_{ii}}{(h-\varepsilon_{0})^{2}}+\frac{2h_{ti}h_{i}}{(h-\varepsilon_{0})^{2}}-\frac{2h_{t}h_{i}^{2}}{(h-\varepsilon_{0})^{3}}\\
=&\frac{-h_{tii}}{h-\varepsilon_{0}}+\frac{h_{t}h_{ii}}{(h-\varepsilon_{0})^{2}}.
\end{split}
\end{equation}
In the last equality we used \eqref{lemmaC2-1eq2}. This implies that
\begin{equation}\label{lemmaC2-1ieq4}
-h_{tii}-h_{t}\leq-\frac{h_{t}h_{ii}}{h-\varepsilon_{0}}-h_{t}=-\frac{h_{t}}{h-\varepsilon_{0}}\left(h_{ii}+(h-\varepsilon_{0})\right)=Q(b_{ii}-\varepsilon_{0}).
\end{equation}
Moreover, we have
\begin{equation}\label{lemmaC2-1eq5}
\begin{split}
\partial_{t}Q
=&~\frac{-h_{tt}}{h-\varepsilon_{0}}+\frac{h_{t}^{2}}{(h-\varepsilon_{0})^{2}}\\
=&~\frac{fhK\psi}{h-\varepsilon_{0}}\left(\frac{h_{t}}{h}+\frac{K_{t}}{K}+\frac{\psi_{t}}{\psi}\right)+Q+Q^{2}.
\end{split}
\end{equation}
Denote
\begin{equation}\label{def-u}
u=\frac{h}{\ell},\quad\psi(\xi,u)=\psi(\xi,\frac{h}{\ell}).
\end{equation}
According to the definition of $u$ and $\psi$, it follows that
\begin{equation}\label{lemmaC2-1eq6}
\frac{\psi_{t}}{\psi}=\frac{\partial_{u}\psi}{\psi}u_{t}=-\frac{\partial_{u}\phi}{\phi}\cdot\frac{h_{t}}{\ell},
\end{equation}
where $\partial_{u}\psi(\xi,u)$ denote the partial derivative of $\psi$ with respect to its second variable $u$, and $\partial_{u}\phi$ denotes the partial derivative of $\phi$ with respect to $u$.
For simplicity, let us put
\begin{equation*}
\sigma_{n}=\det\left(\nabla^{2}h+h\emph{I}\right),\quad \sigma_{n}^{ij}=\frac{\partial\det\left(\nabla^{2}h+h\emph{I}\right)}{\partial b_{ij}}.
\end{equation*}
Using \eqref{lemmaC2-1ieq4}, at the point $\xi_{0}$, we calculate that
\begin{equation}\label{lemmaC2-1ieq7}
\begin{split}
K_{t}=\partial_{t}(\sigma_{n}^{-1})&=-\sigma_{n}^{-2}\sum\limits_{i,j}\sigma_{n}^{ij}(h_{tij}+h_{t}\delta_{ij})\\
&\leq\sigma_{n}^{-2}\sum\limits_{i}\sigma_{n}^{ii}Q(b_{ii}-\varepsilon_{0})\\
&=KQ(n-\varepsilon_{0}\sum\limits_{i}b^{ii}),
\end{split}
\end{equation}
where $\{b^{ij}\}$ denotes the inverse matrix of $\{b_{ij}\}$. Here, we used $\sigma_{n}^{ii}=\sigma_{n}b^{ii}$ and
\begin{equation*}
\sum\limits_{i}\sigma_{n}^{ii}b_{ii}=n\sigma_{n},\quad \sum\limits_{i}\sigma_{n}^{ii}=\sigma_{n}\sum\limits_{i}b^{ii}.
\end{equation*}
It follows from \eqref{lemmaC2-1ieq7} and $\sum_{i}b^{ii}\geq n(\Pi_{i}b^{ii})^{\frac{1}{n}}=nK^{\frac{1}{n}}$ that
\begin{equation}\label{lemmaC2-1ieq8}
\frac{K_{t}}{K}\leq nQ(1-\varepsilon_{0}K^{\frac{1}{n}}).
\end{equation}
Substituting \eqref{lemmaC2-1eq1}, \eqref{lemmaC2-1eq6} and \eqref{lemmaC2-1ieq8} into \eqref{lemmaC2-1eq5}, we obtain
\begin{equation*}
\begin{split}
\partial_{t}Q
&\leq\frac{fhK\psi}{h-\varepsilon_{0}}\left(\frac{h_{t}}{h}+nQ(1-\varepsilon_{0}K^{\frac{1}{n}})-\frac{\partial_{u}\phi}{\phi\ell}h_{t}\right)+Q+Q^{2}\\
&=(Q+\frac{h}{h-\varepsilon_{0}})\left(-\frac{Q(h-\varepsilon_{0})}{h}+nQ(1-\varepsilon_{0}K^{\frac{1}{n}})+\frac{\partial_{u}\phi}{\phi\ell}Q(h-\varepsilon_{0})\right)+Q+Q^{2}\\
&=Q^{2}\left(\frac{h-\varepsilon_{0}}{h}(\frac{\partial_{u}\phi}{\phi\ell}h-1)+(n+1-n\varepsilon_{0}K^{\frac{1}{n}})\right)+Q\left(\frac{\partial_{u}\phi}{\phi\ell}h+\frac{h}{h-\varepsilon_{0}}(n-n\varepsilon_{0}K^{\frac{1}{n}})\right).
\end{split}
\end{equation*}
By Lemma \ref{lemmaC0}, together with the continuity of $\phi$ and $\partial_{u}\phi$, we conclude that both $\phi$ and $\partial_{u}\phi$ are bounded on the bounded closed interval $\mathcal{C}_{\theta}\times I_{[0,T)}$, and $\ell$ is
a bounded function of $\xi$ on $\mathcal{C}_{\theta}$. Consequently,
\begin{equation*}
\partial_{t}Q\leq Q^{2}\left(C_{1}-C_{2}K^{\frac{1}{n}}\right)+Q\left(C_{1}-C_{2}K^{\frac{1}{n}}\right),
\end{equation*}
where $C_{1}$ is a positive constant depending on $n$, $\theta$, $\min_{\mathcal{C}_{\theta}}f$, $\max_{\mathcal{C}_{\theta}}f$, $||h||_{C^{0}(\mathcal{C}_{\theta}\times[0,T))}$, $||\phi||_{C^{0}(\mathcal{C}_{\theta}\times I_{[0,T)})}$
and $||\phi||_{C^{1}(\mathcal{C}_{\theta}\times I_{[0,T)})}$, $C_{2}$ is a positive constant depending on $n$, $\theta$ and $||h||_{C^{0}(\mathcal{C}_{\theta}\times[0,T))}$. If $K$ is sufficiently large, then
\begin{equation*}
\partial_{t}Q<0.
\end{equation*}
This contradicts the assumption that $Q$ attains a maximum at $\xi_{0}$ at time $t$, unless $K$ is uniformly bounded from above. This establishes the desired upper bound for $K$.
\end{proof}

\begin{lemma}\label{lemmaC2-2}
Let $\theta\in(0,\frac{\pi}{2})$. Suppose $h$ is a smooth, strictly convex solution to flow \eqref{flow-2} on $\mathcal{C}_{\theta}\times[0,T)$, and $f$, $\phi$ are smooth, positive functions satisfying \eqref{condition1}, then
\begin{equation*}
K(\xi,t)\geq C,
\end{equation*}
where $C$ is a positive constant depending on $n$, $\theta$, $\min_{\mathcal{C}_{\theta}}f$, $\max_{\mathcal{C}_{\theta}}f$, $||h||_{C^{0}(\mathcal{C}_{\theta}\times[0,T))}$, $||h||_{C^{1}(\mathcal{C}_{\theta}\times[0,T))}$,
$||\phi||_{C^{0}(\mathcal{C}_{\theta}\times I_{[0,T)})}$, $||\phi||_{C^{1}(\mathcal{C}_{\theta}\times I_{[0,T)})}$, $||\phi||_{C^{2}(\mathcal{C}_{\theta}\times I_{[0,T)})}$ and $\Sigma_{0}$.
\end{lemma}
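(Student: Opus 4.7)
The claim is equivalent to a uniform upper bound on $\sigma_n := \det(h_{ij}+h\delta_{ij}) = K^{-1}$. Writing the flow equation as $h - h_t = fh\psi/\sigma_n$ with $\psi := 1/\phi$, and using Lemmas \ref{lemmaC0}--\ref{lemmaC2-1} together with the bounds on $\phi$, this reduces to showing that $h - h_t$ stays bounded below by a positive constant. My plan is to apply the maximum principle to
\[
v := \log(h - h_t)
\]
and establish $\min v \geq -C$.

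Differentiating the Robin condition $\nabla_\mu h = \cot\theta h$ in $t$ yields $\nabla_\mu h_t = \cot\theta h_t$, hence $\nabla_\mu v = \cot\theta > 0$ on $\partial\mathcal{C}_\theta$. Hopf's lemma then excludes a boundary minimum, so at each $t$ the spatial minimum of $v$ is attained at some interior point $\xi_0$. There $\nabla v(\xi_0,t)=0$ and $\sigma_n^{ij}\nabla^2_{ij}v(\xi_0,t) \geq 0$, which (using $\sigma_n^{ij}(h_{ij}+h\delta_{ij}) = n\sigma_n$) gives
\[
\sigma_n^{ij}h_{tij} \leq \sigma_n^{ij}h_{ij} = n\sigma_n - h\sum_i\sigma_n^{ii}.
\]
Differentiating $h_t = h - fh\psi/\sigma_n$ in time, with $\partial_t(fh\psi) = fh_t\psi\bigl(1 - (h\partial_u\phi)/(\ell\phi)\bigr)$ as derived in the proof of Lemma \ref{lemmaC2-1} (the $C^2$-dependence on $\phi$ will enter when one estimates the resulting error terms via $\partial_u^2\phi$), and combining with the critical-point inequality, $fh\psi/\sigma_n = h-h_t$, and $\sum\sigma_n^{ii}/\sigma_n = \sum b^{ii}$, one obtains at the minimum
\[
\partial_t v \geq (h - h_t)\sum_i b^{ii} - C_1,
\]
with $C_1$ depending on the quantities already controlled.

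The main obstacle is that the favorable term $(h - h_t)\sum b^{ii}$ may degenerate as $v\to -\infty$: by AM--GM $\sum b^{ii} \geq n\sigma_n^{-1/n}$, which tends to zero as $\sigma_n\to\infty$. To close the estimate I expect one must pair $v$ with a barrier, e.g.
\[
\tilde v := \log(h - h_t) - \lambda\log h\quad(\lambda\in(0,1)),
\]
which still satisfies $\nabla_\mu\tilde v = (1-\lambda)\cot\theta > 0$, so Hopf continues to exclude boundary minima, while contributing through $\sigma_n^{ij}\nabla^2_{ij}\log h$ an extra term proportional to $\sum\sigma_n^{ii}$ that can be tuned to dominate the bad contributions. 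Balancing against the structural assumption \eqref{condition1}, which forces $\phi(\xi,h/\ell)(h/\ell)^n$ to stay strictly apart from $f$ in both asymptotic regimes, one obtains $\partial_t\tilde v > 0$ at the spatial minimum whenever $\sigma_n$ exceeds a threshold depending only on the admissible constants. This yields $\min\tilde v \geq -C$, and hence $h - h_t \geq c > 0$, giving $\sigma_n \leq C'$ and therefore $K \geq 1/C'$ as required.
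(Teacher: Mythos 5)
Your overall strategy — minimizing an auxiliary function built from $\log(h-h_t)$ (equivalently $\log K + \log(fh\psi)$) instead of maximizing the paper's $Q = \log(f^{-1}K^{-1}) - N\log h$ — is a legitimate reformulation; the two are related by $Q = -v - (N-1)\log h + \log\psi$, so a lower bound on $v + (N-1)\log h$ with $N>1$ is exactly what the paper produces. Your boundary computation is in fact cleaner than the paper's: differentiating the Robin condition in $t$ gives $\nabla_\mu(h-h_t)=\cot\theta\,(h-h_t)$ directly, so $\nabla_\mu v = \cot\theta > 0$, which already excludes a boundary minimum by a first-order argument (invoking Hopf's lemma is unnecessary, but harmless). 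Your identification of the degenerating favorable term $(h-h_t)\sum_i b^{ii}$ and the recognition that a barrier is needed are also correct.

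However, the barrier you propose has the wrong sign, and this is fatal to the argument as written. With $\tilde v = v - \lambda\log h$, $\lambda\in(0,1)$, the only new term in $\partial_t\tilde v$ that does not vanish as $\sigma_n\to\infty$ is $-\lambda\,h_t/h$, which tends to $-\lambda<0$: it makes the estimate worse, not better. The "extra term proportional to $\sum\sigma_n^{ii}$" you expect from $\sigma_n^{ij}\nabla^2_{ij}\log h$ does appear, but after one divides by $\sigma_n$ in forming $\partial_t\tilde v$ it becomes $(1-\lambda)(h-h_t)\sum b^{ii}$ (plus controlled lower-order pieces), which still degenerates — the coefficient $1-\lambda$ merely shrinks the already useless favorable term. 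Moreover, the appeal to condition \eqref{condition1} at the end cannot rescue this: that hypothesis is consumed in Lemma \ref{lemmaC0} to confine $h/\ell$ to a compact interval $I_{[0,T)}$, after which it gives no information beyond the $C^0$/$C^1$/$C^2$ bounds on $\phi$ that you are already using. The fix is to flip the sign and enlarge the coefficient: take $\tilde v = v + \Lambda\log h$ with $\Lambda$ large. The boundary derivative becomes $(1+\Lambda)\cot\theta>0$ (still excluding boundary minima), and the new time-derivative contribution $+\Lambda\,h_t/h\to\Lambda$ is a large positive constant that dominates the bounded bad terms $\frac{\partial_t(fh\psi)}{fh\psi}-n$ once $\sigma_n$ is large and $\Lambda>n+C_0$; the remaining barrier contributions through $\sigma_n^{ij}\nabla^2_{ij}\log h$ are then either nonnegative or $O(h-h_t)$. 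This precisely matches the paper's choice of $-N\log h$ inside $Q$ with $N$ large, under the sign flip $Q\leftrightarrow -\tilde v$ mentioned above.
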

\begin{proof}
Consider the auxiliary function
\begin{equation*}
Q=\log(f^{-1}K^{-1})-N\log h,
\end{equation*}
where $N$ is a positive constant to be determined later.

We may assume that $\min K\ll 1$, otherwise, the proof is complete. Due to Lemma \ref{lemmaC0}, the lower bound of $K(\xi,t)$ follows from the upper bound of $Q(\xi,t)$. Hence we only need to derive the upper bound of $Q(\xi,t)$,
we also assume that $\max Q\gg 1$.

\textbf{Claim}: \emph{For $t>0$, we have $\nabla_{\mu}Q<0$ on $\partial\mathcal{C}_{\theta}$. }

For $t>0$,
\begin{equation}\label{lemmaC2-2eq1}
\begin{split}
h_{t\mu}
=&\nabla_{\mu}(-fhK\psi+h)\\
=&-\nabla_{\mu}fhK\psi-\cot\theta fhK\psi-fh\nabla_{\mu}K\psi-fhK\nabla_{\mu}\psi+\cot\theta h,
\end{split}
\end{equation}
and
\begin{equation}\label{lemmaC2-2eq2}
h_{t\mu}=\cot\theta h_{t}=\cot\theta(-fhK\psi+h).
\end{equation}
Combining \eqref{lemmaC2-2eq1} and \eqref{lemmaC2-2eq2}, we have
\begin{equation}\label{lemmaC2-2eqK}
\frac{\nabla_{\mu}K}{K}=-\frac{\nabla_{\mu}f}{f}-\frac{\nabla_{\mu}\psi}{\psi}.
\end{equation}
According to the definition of $\psi$ and \eqref{def-u}, then
\begin{equation*}
\nabla_{\mu}\psi(\xi,\frac{h}{\ell})=\partial_{\mu}\psi(\xi,\frac{h}{\ell})+\partial_{u}\psi(\xi,\frac{h}{\ell})\nabla_{\mu}\left(\frac{h}{\ell}\right),
\end{equation*}
where $\partial_{\mu}\psi(\xi,\frac{h}{\ell})$ denotes the directional derivative of $\psi$ with respect to its first variable $\xi$ in the direction of $\mu$. By \eqref{C0-bd1}, we obtain
\begin{equation}\label{def-nablaphi}
\frac{\nabla_{\mu}\psi}{\psi}=\frac{\partial_{\mu}\psi}{\psi}=-\frac{\partial_{\mu}\phi}{\phi}.
\end{equation}
Hence, for sufficiently large $N$
\begin{equation*}
\nabla_{\mu}Q=-\frac{\nabla_{\mu}f}{f}-\frac{\nabla_{\mu}K}{K}-N\frac{\nabla_{\mu}h}{h}=-\frac{\partial_{\mu}\phi}{\phi}-N\cot\theta<0,
\end{equation*}
where we used both $\phi$ and $\partial_{\mu}\phi$ are bounded on the bounded closed interval $\mathcal{C}_{\theta}\times I_{[0,T)}$.

Therefore, for any fixed $t\in[0,T)$, the maximum of $Q(\xi,t)$ is attained at some point $\xi_{0}\in\mathcal{C}_{\theta}\backslash\partial\mathcal{C}_{\theta}$.
Choose a local orthonormal frame $\{e_{i}\}_{i=1}^{n}$ such that $\{b_{ij}\}$ is diagonal at the point $\xi_{0}$. At the point $\xi_{0}$,
\begin{equation}\label{lemmaC2-2eq3}
\nabla_{i}Q=-\frac{\nabla_{i}(fK)}{fK}-N\frac{\nabla_{i}h}{h}=0,
\end{equation}
and
\begin{equation}\label{lemmaC2-2ieq4}
\nabla_{ii}^{2}Q=-\frac{\nabla_{ii}^{2}(fK)}{fK}+\frac{(\nabla_{i}(fK))^{2}}{(fK)^{2}}-N\frac{\nabla_{ii}^{2}h}{h}+N\frac{(\nabla_{i}h)^{2}}{h^{2}}\leq0.
\end{equation}
Substituting \eqref{lemmaC2-2eq3} into \eqref{lemmaC2-2ieq4}, we obtain
\begin{equation}\label{lemmaC2-2ieq5}
-\nabla_{ii}^{2}(fK)\leq NfK\frac{\nabla_{ii}^{2}h}{h}+(-N-N^{2})fK\frac{(\nabla_{i}h)^{2}}{h^{2}}.
\end{equation}
Thus,
\begin{equation}\label{lemmaC2-2eq6}
\partial_{t}Q=\frac{\partial_{t}\sigma_{n}}{\sigma_{n}}-N\frac{h_{t}}{h}=\sum\limits_{i}b^{ii}(h_{tii}+h_{t})-N\frac{h_{t}}{h},
\end{equation}
and
\begin{equation*}
\begin{split}
h_{tii}
=&~\nabla_{ii}(-fhK\psi+h)\\
=&-(fK)_{ii}h\psi-fKh_{ii}\psi-fKh\psi_{ii}\\
&-2(fK)_{i}h\psi_{i}-2(fK)_{i}h_{i}\psi-2fKh_{i}\psi_{i}+h_{ii}.
\end{split}
\end{equation*}
Combining the definition of $\psi$ with \eqref{def-u}, we have
\begin{equation}\label{lemmaC2-2eq7}
\nabla_{i}\psi(\xi,\frac{h}{\ell})=\partial_{i}\psi(\xi,\frac{h}{\ell})+\partial_{u}\psi(\xi,\frac{h}{\ell})\nabla_{i}\left(\frac{h}{\ell}\right),
\end{equation}
where $\partial_{i}\psi(\xi,u)$ denotes the partial derivative of $\psi$ with respect to the $i$-th component $\xi_{i}$ of its first variable $\xi$. Since $\psi=\phi^{-1}$, we obtain
\begin{equation*}
\nabla_{i}\psi(\xi,\frac{h}{\ell})=-\frac{1}{\phi^{2}}\left(\partial_{i}\phi+\partial_{u}\phi(\frac{h_{i}}{\ell}-h\frac{\ell_{i}}{\ell^{2}})\right),
\end{equation*}
and
\begin{equation*}
\begin{split}
\nabla_{ii}^{2}\psi(\xi,\frac{h}{\ell})=&~\partial_{ii}^{2}\psi+2\partial_{i}(\partial_{u}\psi)\nabla_{i}(\frac{h}{\ell})+\partial_{uu}^{2}\psi\left(\nabla_{i}(\frac{h}{\ell})\right)^{2}+\partial_{u}\psi\nabla_{ii}^{2}(\frac{h}{\ell})\\
=&-\frac{1}{\phi^{2}}\partial_{ii}^{2}\phi+2\frac{1}{\phi^{3}}(\partial_{i}\phi)^{2}-2\left(\frac{1}{\phi^{2}}\partial_{i}(\partial_{u}\phi)-2\frac{1}{\phi^{3}}\partial_{i}\phi\cdot\partial_{u}\phi\right)(\frac{h_{i}}{\ell}-h\frac{\ell_{i}}{\ell^{2}})\\
&-\left(\frac{1}{\phi^{2}}\partial_{uu}^{2}\phi-2\frac{1}{\phi^{3}}(\partial_{u}\phi)^{2}\right)(\frac{h_{i}}{\ell}-h\frac{\ell_{i}}{\ell^{2}})^{2}
-\frac{\partial_{u}\phi}{\phi^{2}}\left(\frac{h_{ii}}{\ell}-2\frac{h_{i}\ell_{i}}{\ell^{2}}-\frac{h\ell_{ii}}{\ell^{2}}+2\frac{h(\ell_{i})^{2}}{\ell^{3}}\right).
\end{split}
\end{equation*}
Using Lemmas \ref{lemmaC0} and \ref{lemmaC1}, along with the boundedness of $\phi$, $\partial_{i}\phi$, $\partial_{u}\phi$, $\partial^{2}_{ii}\phi$, $\partial^{2}_{iu}\phi$ and $\partial^{2}_{uu}\phi$ on $\mathcal{C}_{\theta}\times I_{[0,T)}$, and the fact that $\ell$, $\ell_{i}$ and $\ell_{ii}$ are bounded functions of $\xi$
on $\mathcal{C}_{\theta}$, we have
\begin{equation}\label{lemmaC2-2ieq8}
\begin{split}
\nabla_{ii}^{2}\psi\geq -C_{1}(1+(\nabla_{i}h)^{2})-\frac{\partial_{u}\phi}{\phi^{2}\ell}\nabla_{ii}^{2}h,
\end{split}
\end{equation}
where $C_{1}$ is a positive constant depending on $\theta$, $||h||_{C^{0}(\mathcal{C}_{\theta}\times[0,T))}$, $||h||_{C^{1}(\mathcal{C}_{\theta}\times[0,T))}$, $||\phi||_{C^{0}(\mathcal{C}_{\theta}\times I_{[0,T)})}$,
$||\phi||_{C^{1}(\mathcal{C}_{\theta}\times I_{[0,T)})}$ and $||\phi||_{C^{2}(\mathcal{C}_{\theta}\times I_{[0,T)})}$.

In view of \eqref{lemmaC2-2eq3}, \eqref{lemmaC2-2ieq5}, \eqref{lemmaC2-2eq7} and \eqref{lemmaC2-2ieq8}, for sufficient large $N$, it follows that
\begin{equation}\label{lemmaC2-2ieq9}
\begin{split}
h_{tii}
\leq&(1+(N-1)fK\psi)\nabla_{ii}^{2}h+(N-N^{2})fhK\psi\frac{(\nabla_{i}h)^{2}}{h^{2}}\\
&+2(N-1)fK\nabla_{i}h\nabla_{i}\psi-fKh\nabla_{ii}^{2}\psi\\
\leq&\left(1+(N-1)fK\phi^{-1}+fKh\frac{\partial_{u}\phi}{\phi^{2}\ell}\right)(b_{ii}-h)\\
&-2(N-1)fK\phi^{-2}\left(\partial_{i}\phi-\partial_{u}\phi \frac{h\ell_{i}}{\ell^{2}}\right)\nabla_{i}h+C_{1}fKh\\
&+\left((N-N^{2})fh^{-1}K\phi^{-1}-2(N-1)fK\frac{\partial_{u}\phi}{\phi^{2}\ell}+C_{1}fKh\right)(\nabla_{i}h)^{2}.\\
\end{split}
\end{equation}
Substituting \eqref{lemmaC2-2ieq9} into \eqref{lemmaC2-2eq6}, we deduce
\begin{equation*}
\begin{split}
\partial_{t}Q
\leq&\left(1+(N-1)fK\phi^{-1}+fKh\frac{\partial_{u}\phi}{\phi^{2}\ell}\right)(n-h\sum\limits_{i}b^{ii})\\
&+\left((N-N^{2})fh^{-1}K\phi^{-1}-2(N-1)fK\frac{\partial_{u}\phi}{\phi^{2}\ell}+C_{1}fKh\right)\sum\limits_{i}b^{ii}(\nabla_{i}h)^{2}\\
&-\left(2(N-1)fK\phi^{-2}(\partial_{i}\phi-\partial_{u}\phi \frac{h\ell_{i}}{\ell^{2}})\nabla_{i}h-C_{1}fKh\right)\sum\limits_{i}b^{ii}+(\sum\limits_{i}b^{ii}-\frac{N}{h})h_{t}\\
=&\left(n-N-(n-N-nN)fK\phi^{-1}+nfKh\frac{\partial_{u}\phi}{\phi^{2}\ell}\right)\\
&+\left((N-N^{2})fh^{-1}K\phi^{-1}-2(N-1)fK\frac{\partial_{u}\phi}{\phi^{2}\ell}+C_{1}fKh\right)\sum\limits_{i}b^{ii}(\nabla_{i}h)^{2}\\
&-\left(2(N-1)fK(\frac{\partial_{i}\phi}{\phi^{2}}-\partial_{u}\phi \frac{h\ell_{i}}{\phi^{2}\ell^{2}})\nabla_{i}h-(C_{1}-\frac{N}{\phi}-h\frac{\partial_{u}\phi}{\phi^{2}\ell})fKh\right)\sum\limits_{i}b^{ii}.
\end{split}
\end{equation*}
By Lemmas \ref{lemmaC0} and \ref{lemmaC1} again, combined with the boundedness of $\phi$, $\partial_{i}\phi$, $\partial_{u}\phi$ on $\mathcal{C}_{\theta}\times I_{[0,T)}$ and of $\ell$, $\ell_{i}$ on $\mathcal{C}_{\theta}$,
simplifying the above inequality yields
\begin{equation}\label{lemmaC2-2ieq10}
\begin{split}
\partial_{t}Q
\leq&~n-N-(n-N-nN)fK\phi^{-1}+C_{2}fK\\
&+\left((N-N^{2})h^{-1}\phi^{-1}|\nabla_{i}h|^{2}+2(N-1)C_{2}(|\nabla_{i}h|^{2}+|\nabla_{i}h|)+(C_{2}-\frac{N}{C_{2}})\right)fK\sum\limits_{i}b^{ii},
\end{split}
\end{equation}
where $C_{2}$ is a positive constant depending on $n$, $\theta$, $||h||_{C^{0}(\mathcal{C}_{\theta}\times[0,T))}$, $||h||_{C^{1}(\mathcal{C}_{\theta}\times[0,T))}$, $||\phi||_{C^{0}(\mathcal{C}_{\theta}\times I_{[0,T)})}$,
$||\phi||_{C^{1}(\mathcal{C}_{\theta}\times I_{[0,T)})}$ and $C_{1}$.

Since $h$ and $\phi$ are bounded positive functions, if $\max Q\rightarrow+\infty$ (i.e. $\min K\rightarrow 0$), then for sufficiently large $N$, the right-hand side of inequality \eqref{lemmaC2-2ieq10} would become strictly negative. This would imply that $Q$ could not tend to infinity, which contradicts the assumption. Hence, $K$ must remain uniformly bounded below by a positive constant.
\end{proof}

\begin{lemma}\label{lemmaC2-3}
Let $\theta\in(0,\frac{\pi}{2})$. Suppose $h$ is a smooth, strictly convex solution to flow \eqref{flow-2} on $\mathcal{C}_{\theta}\times[0,T)$, and $f$, $\phi$ are smooth, positive functions satisfying \eqref{condition1}, then
\begin{equation*}
\frac{1}{C}\leq\kappa_{i}(\xi,t)\leq C,
\end{equation*}
where $C$ is a positive constant depending on $n$, $\theta$, $\min_{\mathcal{C}_{\theta}}f$, $\max_{\mathcal{C}_{\theta}}f$, $||f||_{C^{1}(\mathcal{C}_{\theta})}$, $||f||_{C^{2}(\mathcal{C}_{\theta})}$,
$||h||_{C^{0}(\mathcal{C}_{\theta}\times[0,T))}$, $||h||_{C^{1}(\mathcal{C}_{\theta}\times[0,T))}$, $||\phi||_{C^{0}(\mathcal{C}_{\theta}\times I_{[0,T)})}$, $||\phi||_{C^{1}(\mathcal{C}_{\theta}\times I_{[0,T)})}$, $||\phi||_{C^{2}(\mathcal{C}_{\theta}\times I_{[0,T)})}$ and $\Sigma_{0}$.
\end{lemma}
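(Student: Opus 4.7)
The plan is to bound the largest eigenvalue $\lambda_{\max}$ of the symmetric matrix $\{b_{ij}\}=\{h_{ij}+h\delta_{ij}\}$ from above on $\mathcal{C}_\theta\times[0,T)$. Once this is achieved, Lemmas \ref{lemmaC2-1} and \ref{lemmaC2-2} give $c_{1}\leq \sigma_{n}=\det(b_{ij})\leq c_{2}$ with positive constants $c_1,c_2$; since every eigenvalue $b_{ii}\leq \lambda_{\max}$, each $b_{ii}$ must then be bounded below by $c_1/\lambda_{\max}^{n-1}$. This yields two-sided bounds on the principal radii and hence on $\kappa_i$. To bound $\lambda_{\max}$, I would introduce the auxiliary function
\[
W(\xi,t)=\log \lambda_{\max}(b(\xi,t))+\Psi(h(\xi,t)),
\]
with $\Psi(h)=-A h$ (or $-A h^{2}$) for a sufficiently large constant $A$ to be chosen. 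Since $\lambda_{\max}$ is only Lipschitz at points of higher multiplicity, I would use the standard trick of replacing $W$ near a maximum point $(\xi_{0},t_{0})$ by $\widetilde W=\log(b_{ij}\eta^{i}\eta^{j}/|\eta|^{2})+\Psi(h)$, where $\eta$ is obtained by parallel transport of a unit eigenvector of $b$ at $(\xi_{0},t_{0})$; this $\widetilde W$ is smooth and touches $W$ from below at $(\xi_{0},t_{0})$.

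The first main step is the boundary analysis: I would show that $\max W$ cannot be attained on $\partial\mathcal{C}_\theta$. Choosing the adapted frame $e_{n}=\mu$ near a boundary point, Proposition \ref{prop-2} gives $h_{\alpha n}=0$, so $b_{\alpha n}=0$ and $\mu$ is an eigendirection of $b$. Thus the eigenvector realising $\lambda_{\max}$ is either $e_{n}$ itself or lies in $T_{\xi_0}\partial\mathcal{C}_\theta$. In both cases, differentiating $b_{\alpha n}=0$ and the Robin condition $\nabla_{\mu}h=\cot\theta\, h$ along $\mu$, and using $\nabla_{\mu}\ell=\cot\theta\,\ell$, yields an identity of the form $\nabla_{\mu}W=-\cot\theta\,(\text{positive leading term in }\lambda_{\max})+(\text{bounded terms})$, which is strictly negative once $A$ is fixed large; this contradicts the Hopf boundary-point lemma.

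Therefore the maximum is attained at some interior $\xi_{0}\in\mathcal{C}_\theta\setminus\partial\mathcal{C}_\theta$, and I would diagonalise $b$ there so that $\lambda_{1}=b_{11}(\xi_{0},t_{0})=\lambda_{\max}$. Next I would compute the parabolic operator $\partial_{t}-\sigma_{n}^{-1}\sigma_{n}^{ij}\nabla^{2}_{ij}$ applied to $\widetilde W$ by differentiating \eqref{flow-2} twice in space, using the commutator identities \eqref{ricci-eq1}--\eqref{ricci-eq2} to exchange $\nabla^{2}h_{t}$ with $\partial_{t}b_{11}$ modulo lower-order curvature terms. The dangerous third-derivative terms are eliminated via the first-order condition $\nabla\widetilde W=0$, which expresses $\nabla b_{11}$ in terms of $\nabla h$ and the bounded data. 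What remains splits into a quadratically negative contribution $-c A \lambda_{1}^{2}$ coming from $\Psi(h)$ and the Gauss curvature bound $K\geq c>0$ (Lemma \ref{lemmaC2-2}), and error terms bounded in $\lambda_{1}$ of lower order by constants depending on $\|h\|_{C^1}$, $\|\phi\|_{C^2}$, $\|f\|_{C^2}$ and the bounds for $K$. Imposing $0\geq (\partial_{t}-\sigma_{n}^{-1}\sigma_{n}^{ij}\nabla^{2}_{ij})\widetilde W$ at the maximum yields $\lambda_{1}\leq C$.

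The main obstacle I anticipate is the bookkeeping of higher covariant derivatives on the boundary, in particular $h_{nnn}$ and $b_{11;n}$, which are not directly controlled by Proposition \ref{prop-2}; one must combine \eqref{prop-2-eq1}--\eqref{prop-2-eq2} with the commutator formulas and with the $\mu$-derivative of the flow equation to close the sign of $\nabla_{\mu}W$. A secondary technical point is that since $\phi=\phi(\xi,h/\ell)$, the derivatives $\partial_{u}\phi$ and $\partial^{2}_{uu}\phi$ produce extra interior terms involving $\nabla h$ and $\nabla^{2}h$; these are absorbable because the range of $h/\ell$ stays in the compact set $I_{[0,T)}$ already used in Lemma \ref{lemmaC2-2}, allowing us to treat the $C^{1}$ and $C^{2}$ norms of $\phi$ there as fixed constants.
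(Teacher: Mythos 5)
Your strategy — bound the largest principal radius $\lambda_{\max}$ of $b=(h_{ij}+h\delta_{ij})$ via a maximum-principle argument, then invert using the two-sided pinching of the Gauss curvature from Lemmas \ref{lemmaC2-1} and \ref{lemmaC2-2} — is the same overall plan as the paper, but the auxiliary function and consequently the detailed calculus are genuinely different. The paper works with the smooth quantity
\[
Q=\sigma_{1}(b)+\tfrac{A}{2}\,|\nabla h|^{2},
\]
so it never needs the eigenvector-perturbation trick; at a boundary max it must split into two cases (either $\nabla_{\mu}Q<0$ or a direct bound on $\sigma_1$ follows), and at an interior max it differentiates $\log(h-h_{t})=-\log\sigma_{n}+\log(fh\psi)$ twice, trades $\nabla^{2}_{jj}b_{ii}$ for $\nabla^{2}_{ii}b_{jj}$ by \eqref{ricci-eq2}, and lets the terms $-A\sigma_{1}$ and $-\sigma_{1}\sum_{i}b^{ii}$ (the latter eventually beating $+A|\nabla h|^{2}\sum_{i}b^{ii}$) close the estimate. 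Your choice $W=\log\lambda_{\max}-Ah$ buys you a cleaner boundary step — with $e_{n}=\mu$ an eigendirection by Proposition \ref{prop-2}, one gets $\nabla_{\mu}W\le (\text{bdd})-A\cot\theta\,h<0$ unconditionally once $A$ is large, so no dichotomy is needed — at the price of needing the standard parallel-transport regularisation of $\lambda_{\max}$. Two points of your sketch should be tightened: (i) the linearised operator for \eqref{flow-2} is $\partial_{t}-fKh\psi\,b^{ij}\nabla^{2}_{ij}=\partial_{t}-(h-h_{t})b^{ij}\nabla^{2}_{ij}$, not $\partial_{t}-\sigma_{n}^{-1}\sigma_{n}^{ij}\nabla^{2}_{ij}$; and (ii) the estimate with $\log\lambda_{\max}$ closes only because the problematic $A^{2}b_{11}\sum_{i}b^{ii}h_{i}^{2}$ produced by the second-order condition is cancelled by the good term $-\sum_{i,k}b^{ii}b^{kk}(\nabla_{1}b_{ik})^{2}\le -A^{2}b_{11}\sum_{k}b^{kk}h_{k}^{2}$ (via $\nabla_{k}b_{11}=Ab_{11}h_{k}$ and full symmetry of $\nabla b$) — this Pogorelov-type cancellation, which the $\log$ normalisation makes exact, is what you need to make explicit. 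The surviving good term is $-(Ah+1)\sum_{i}b^{ii}\lesssim -A\lambda_{\max}^{1/(n-1)}$ (not $-cA\lambda_{\max}^{2}$), using $\sigma_{n}=1/K\le 1/c$ from Lemma \ref{lemmaC2-2}; this is still of lower order $+\infty$ and forces $\lambda_{\max}\le C$ as desired.
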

\begin{proof}
Since the Gauss curvature $K$ of $\Sigma_{t}$ is uniformly bounded from above, it suffices to prove the lower bound of the principal curvatures of $\Sigma_{t}$, which is equivalent to the upper bound of the principal radii of curvature of $\Sigma_{t}$.
To this end, we apply the maximum principle to the auxiliary function
\begin{equation*}
Q=\sigma_{1}+\frac{A}{2}|\nabla h|^{2},
\end{equation*}
where $A>0$ is a constant to be specified later, and $\sigma_{1}:=\sigma_{1}(b_{ij}(\xi,t))$.

\textbf{Claim}: \emph{Fix $A>0$ and suppose $t>0$. Then $\nabla_{\mu}Q<0$ at any point on $\partial\mathcal{C}_{\theta}$, or we can directly obtained a uniform upper bound for the principal curvatures, where $A$ is sufficiently large.}

Choose a local orthonormal frame $\{e_{i}\}_{i=1}^{n}$ such that $e_{n}=\mu$ and $\{b_{ij}\}$ is diagonal at the point $\xi_{0}\in\partial\mathcal{C}_{\theta}$. At the point $\xi_{0}$, we have
\begin{equation}\label{lemmaC2-3eq1}
\begin{split}
\nabla_{\mu}Q
=&\sum\limits_{j}\nabla_{\mu}b_{jj}+A\sum\limits_{j}h_{j}h_{j\mu}\\
=&\cot\theta\sum\limits_{\beta=1}^{n-1}(b_{\mu\mu}-b_{\beta\beta})+\nabla_{\mu}b_{\mu\mu}+A\cot\theta hh_{\mu\mu},
\end{split}
\end{equation}
where we used (see the proof of Lemma 3.3 in \cite{MWW})
\begin{equation*}
h_{\mu\beta}=0\quad \mathrm{and} \quad \nabla_{\mu}b_{\beta\beta}=\cot\theta (b_{\mu\mu}-b_{\beta\beta})
\end{equation*}
for all $1\leq\beta\leq n-1$. On the other hand, we have
\begin{equation*}
\begin{split}
\frac{\nabla_{\mu}K}{K}
=&-\sigma_{n}^{-1}\nabla_{\mu}\sigma_{n}\\
=&-\sigma_{n}^{-1}\left(\sum\limits_{\beta=1}^{n-1}\sigma_{n}^{\beta\beta}\nabla_{\mu}b_{\beta\beta}+\sigma_{n}^{\mu\mu}\nabla_{\mu}b_{\mu\mu}\right)\\
=&-\left(\cot\theta\sum\limits_{\beta=1}^{n-1}b^{\beta\beta}(b_{\mu\mu}-b_{\beta\beta})+b^{\mu\mu}\nabla_{\mu}b_{\mu\mu}\right)\\
=&-\cot\theta b_{\mu\mu}\sum\limits_{\beta=1}^{n-1}b^{\beta\beta}+(n-1)\cot\theta-b^{\mu\mu}\nabla_{\mu}b_{\mu\mu}.
\end{split}
\end{equation*}
Combing this with \eqref{lemmaC2-2eqK}, we obtain
\begin{equation}\label{lemmaC2-3eq2}
\nabla_{\mu}b_{\mu\mu}=\left(\frac{\nabla_{\mu}f}{f}+\frac{\nabla_{\mu}\psi}{\psi}+(n-1)\cot\theta\right)b_{\mu\mu}-\cot\theta b_{\mu\mu}^{2}\sum\limits_{\beta=1}^{n-1}b^{\beta\beta}.
\end{equation}
Substituting \eqref{lemmaC2-3eq2} into \eqref{lemmaC2-3eq1}, we find
\begin{equation*}
\begin{split}
\nabla_{\mu}Q
=&\cot\theta(nb_{\mu\mu}-\sigma_{1})+\left(\frac{\nabla_{\mu}f}{f}+\frac{\nabla_{\mu}\psi}{\psi}+(n-1)\cot\theta\right)b_{\mu\mu}\\
&-\cot\theta b_{\mu\mu}^{2}\sum\limits_{\beta=1}^{n-1}b^{\beta\beta}+A\cot\theta h(b_{\mu\mu}-h)\\
\leq&-\cot\theta\sigma_{1}+\left(\frac{\nabla_{\mu}f}{f}+\frac{\nabla_{\mu}\psi}{\psi}+(2n-1+Ah)\cot\theta\right)b_{\mu\mu}\\
&-(n-1)\cot\theta K^{\frac{1}{n-1}}b_{\mu\mu}^{2+\frac{1}{n-1}},
\end{split}
\end{equation*}
where we used
\begin{equation*}
\sum\limits_{\beta=1}^{n-1}b^{\beta\beta}\geq(n-1)\left(\prod_{\beta=1}^{n-1}b^{\beta\beta}\right)^{\frac{1}{n-1}}=(n-1)K^{\frac{1}{n-1}}b_{\mu\mu}^{\frac{1}{n-1}}.
\end{equation*}
In view of Lemma \ref{lemmaC0}, \ref{lemmaC2-2} and \eqref{def-nablaphi}, we have
\begin{equation*}
\nabla_{\mu}Q\leq\cot\theta\left(-\sigma_{1}+C_{1}b_{\mu\mu}-C_{2}b_{\mu\mu}^{2+\frac{1}{n-1}}\right),
\end{equation*}
where $C_{1}$, $C_{2}$ are positive constants, with $C_{1}$ depending on $n$, $A$, $\theta$, $\min_{C^{0}}f$, $||f||_{C^{1}(\mathcal{C}_{\theta})}$,
$||h||_{C^{0}(\mathcal{C}_{\theta}\times[0,T))}$, $||h||_{C^{1}(\mathcal{C}_{\theta}\times[0,T))}$, $||\phi||_{C^{0}(\mathcal{C}_{\theta}\times I_{[0,T)})}$, $||\phi||_{C^{1}(\mathcal{C}_{\theta}\times I_{[0,T)})}$,
$C_{2}$ depending on $n$ and $K$.

\textbf{Case $1$}. $Q\geq C_{1}b_{\mu\mu}+\frac{A}{2}|\nabla h|^{2}$ (i.e. $\sigma_{1}\geq C_{1}b_{\mu\mu}$) at $\xi_{0}$, then it is clear that $\nabla_{\mu}Q<0$.

\textbf{Case $2$}. $Q\leq C_{1}b_{\mu\mu}+\frac{A}{2}|\nabla h|^{2}$
 (i.e. $\sigma_{1}\leq C_{1}b_{\mu\mu}$) at $\xi_{0}$, it follows that
\begin{equation*}
\begin{split}
\nabla_{\mu}Q\leq&\cot\theta b_{\mu\mu}\left(C_{1}-C_{2}b_{\mu\mu}^{\frac{n}{n-1}}\right)\\
\leq&\cot\theta b_{\mu\mu}\left(C_{1}-C_{2}C_{1}^{-\frac{n}{n-1}}\sigma_{1}^{\frac{n}{n-1}}\right).
\end{split}
\end{equation*}
If $\sigma_{1}> C_{1}^{2-\frac{1}{n}}C_{2}^{\frac{1-n}{n}}$, then $\nabla_{\mu}Q$ is strictly negative. If $\sigma_{1}\leq C_{1}^{2-\frac{1}{n}}C_{2}^{\frac{1-n}{n}}$, by Lemma \ref{lemmaC1}, we can find a time-independent positive constant upper bound for $Q$ 
at any point on $\partial\mathcal{C}_{\theta}$. Consequently, if the maximum of $Q$ attained at some point $\xi_{0}\in\partial\mathcal{C}_{\theta}$, we can obtain the upper bound of the principal curvatures of $\Sigma_{t}$ directly.

In both cases, we have either shown $\nabla_{\mu}Q<0$ at a potential boundary maximum, or directly obtained a uniform upper bound for the principal curvatures. This establishes the claim.

Therefore, for any fixed $t\in[0\times T)$ and sufficient large $A$, according to the above claim, if the maximum of $Q$ is attained on $\xi_{0}\in\partial\mathcal{C}_{\theta}$, we have either gotten a contradiction or directly obtained a time-independent positive constant upper bound for the principal curvatures. Hence, it suffices to consider the case the maximum of $Q$ attained at some point $\xi_{0}\in\mathcal{C}_{\theta}\setminus\partial\mathcal{C}_{\theta}$. Choose a local orthonormal frame $\{e_{i}\}_{i=1}^{n}$ such that $\{b_{ij}(\xi_{0},t)\}$ is diagonal, hence $\{h_{ij}(\xi_{0},t)\}$
is also diagonal. At $\xi_{0}$, we calculate
\begin{equation*}
0=\nabla_{i}Q=\sum\limits_{j}\nabla_{i}b_{jj}+Ah_{i}h_{ii},
\end{equation*}
and
\begin{equation}\label{lemmaC2-3ieq3}
0\geq\nabla_{ii}^{2}Q=\sum\limits_{j}\nabla_{ii}^{2}b_{jj}+A\left(\sum\limits_{j}h_{j}h_{jii}+h_{ii}^{2}\right),
\end{equation}
where $h_{jii}:=\nabla_{i}h_{ji}$. Moreover, we have
\begin{equation}\label{lemmaC2-3eq4}
\partial_{t}Q=\sum\limits_{j}\partial_{t}b_{jj}+A\sum\limits_{j}h_{j}h_{jt}.
\end{equation}
On the other hand,
\begin{equation}\label{lemmaC2-3eq5}
\begin{split}
\log(h-h_{t})=&\log(fKh\psi)\\
=&-\log\det(\nabla^{2}h+h\emph{I})+P,
\end{split}
\end{equation}
where $P:=\log (fh\psi)$. Taking the covariant derivative of \eqref{lemmaC2-3eq5} in the $e_{j}$ direction, combining with \eqref{ricci-eq1}, it follows that
\begin{equation}\label{lemmaC2-3eq6}
\begin{split}
\frac{h_{j}-h_{jt}}{h-h_{t}}
=&-\sum\limits_{i,k}b^{ik}\nabla_{j}b_{ik}+P_{j}\\
=&-\sum\limits_{i}b^{ii}(h_{jii}+h_{i}\delta_{ij})+P_{j},
\end{split}
\end{equation}
where we used that $\nabla b$ is fully symmetric. We also have
\begin{equation*}
\frac{h_{jj}-h_{jjt}}{h-h_{t}}-\frac{(h_{j}-h_{jt})^{2}}{(h-h_{t})^{2}}=-\sum\limits_{i}b^{ii}\nabla_{jj}^{2}b_{ii}+\sum\limits_{i,k}b^{ii}b^{kk}(\nabla_{j}b_{ik})^{2}+P_{jj}.
\end{equation*}
Using the commutator formula \eqref{ricci-eq2}, we find
\begin{equation}\label{lemmaC2-3eq7}
\begin{split}
\frac{h_{jj}-h_{jjt}}{h-h_{t}}
=&\frac{(h_{j}-h_{jt})^{2}}{(h-h_{t})^{2}}-\sum\limits_{i}b^{ii}(\nabla_{ii}^{2}b_{jj}-b_{jj}+b_{ii})\\
&+\sum\limits_{i,k}b^{ii}b^{kk}(\nabla_{j}b_{ik})^{2}+P_{jj}\\
=&\frac{(h_{j}-h_{jt})^{2}}{(h-h_{t})^{2}}-\sum\limits_{i}b^{ii}\nabla_{ii}^{2}b_{jj}+b_{jj}\sum\limits_{i}b^{ii}-n\\
&+\sum\limits_{i,k}b^{ii}b^{kk}(\nabla_{j}b_{ik})^{2}+P_{jj}.
\end{split}
\end{equation}
Together with \eqref{lemmaC2-3ieq3}, \eqref{lemmaC2-3eq4} and \eqref{lemmaC2-3eq7}, we calculate
\begin{equation}\label{lemmaC2-3eq8}
\begin{split}
\frac{\partial_{t}Q}{h-h_{t}}
=&\sum\limits_{j}\frac{(h_{jjt}-h_{jj})+(h_{jj}+h)-h+h_{t}}{h-h_{t}}+A\sum\limits_{j}\frac{h_{j}h_{jt}}{h-h_{t}}\\
=&-\sum\limits_{j}\frac{(h_{j}-h_{jt})^{2}}{(h-h_{t})^{2}}+\sum\limits_{i,j}b^{ii}\nabla_{ii}^{2}b_{jj}+n^{2}-\sum\limits_{i,j,k}b^{ii}b^{kk}(\nabla_{j}b_{ik})^{2}\\
&-\triangle P-\sum\limits_{j}b_{jj}\sum\limits_{i}b^{ii}+\sum\limits_{j}\frac{b_{jj}}{h-h_{t}}-n+A\sum\limits_{j}\frac{h_{j}h_{jt}}{h-h_{t}}\\
\leq&-A\sum\limits_{i,j}b^{ii}h_{j}h_{jii}-A\sum\limits_{i}b^{ii}h_{ii}^{2}+n(n-1)-\triangle P\\
&-\sigma_{1}\sum\limits_{i}b^{ii}+\frac{\sigma_{1}}{h-h_{t}}+A\sum\limits_{j}\frac{h_{j}h_{jt}}{h-h_{t}}.
\end{split}
\end{equation}
Multiplying both sides of \eqref{lemmaC2-3eq6} by $h_{j}$ and summing over $j$, we find
\begin{equation*}
\frac{|\nabla h|^{2}-\sum_{j}h_{j}h_{jt}}{h-h_{t}}=-\sum\limits_{i,j}b^{ii}h_{j}h_{jii}-\sum\limits_{i}b^{ii}h_{i}^{2}+\sum\limits_{j}h_{j}P_{j}.
\end{equation*}
Substituting this into \eqref{lemmaC2-3eq8} yields
\begin{equation}\label{lemmaC2-3eq9}
\begin{split}
\frac{\partial_{t}Q}{h-h_{t}}
\leq&~n(n-1)-\triangle P-\sigma_{1}\sum\limits_{i}b^{ii}+\frac{\sigma_{1}}{h-h_{t}}\\
&+A\left(\frac{|\nabla h|^{2}}{h-h_{t}}-\sum\limits_{j}h_{j}P_{j}+\sum\limits_{i}b^{ii}(h_{i}^{2}-h_{ii}^{2})\right)\\
=&~n(n-1)-\triangle P-\sigma_{1}\sum\limits_{i}b^{ii}+\frac{\sigma_{1}}{h-h_{t}}\\
&+A\frac{|\nabla h|^{2}}{h-h_{t}}-A\sum\limits_{j}h_{j}P_{j}+A\sum\limits_{i}b^{ii}h_{i}^{2}\\
&-A\sigma_{1}+2nAh-Ah^{2}\sum\limits_{i}b^{ii}.
\end{split}
\end{equation}
We also have
\begin{equation*}
P_{j}=\frac{f_{j}}{f}+\frac{h_{j}}{h}+\frac{\psi_{j}}{\psi}=\frac{f_{j}}{f}+\frac{h_{j}}{h}-\frac{1}{\phi}(\partial_{j}\phi+\partial_{u}\phi\frac{h_{j}}{\ell}-\partial_{u}\phi \frac{h\ell_{j}}{\ell^{2}}).
\end{equation*}
and
\begin{equation*}
\begin{split}
P_{jj}
=&\frac{f_{jj}}{f}-\frac{(f_{j})^{2}}{f^{2}}+\frac{h_{jj}}{h}-\frac{(h_{j})^{2}}{h^{2}}+\frac{\psi_{jj}}{\psi}-\frac{(\psi_{j})^{2}}{\psi^{2}}\\
=&\frac{f_{jj}}{f}-\frac{(f_{j})^{2}}{f^{2}}+\frac{h_{jj}}{h}-\frac{(h_{j})^{2}}{h^{2}}-\frac{1}{\phi}\partial_{jj}^{2}\phi+2\frac{1}{\phi^{2}}(\partial_{j}\phi)^{2}\\
&-(2\frac{\partial_{j}(\partial_{u}\phi)}{\phi}-4\frac{\partial_{j}\phi\cdot\partial_{u}\phi}{\phi^{2}})(\frac{h_{j}}{\ell}-\frac{h\ell_{j}}{\ell^{2}})
-(\frac{\partial_{uu}^{2}\phi}{\phi}-2\frac{(\partial_{u}\phi)^{2}}{\phi^{2}})(\frac{h_{j}}{\ell}-\frac{h\ell_{j}}{\ell^{2}})^{2}\\
&-\frac{\partial_{u}\phi}{\phi}(\frac{h_{jj}}{\ell}-2\frac{h_{j}\ell_{j}}{\ell^{2}}-\frac{h\ell_{jj}}{\ell^{2}}+2\frac{h(\ell_{j})^{2}}{\ell^{3}})-(\frac{\partial_{j}\phi}{\phi}+\partial_{u}\phi\frac{h_{j}}{\phi\ell}-\partial_{u}\phi\frac{h\ell_{j}}{\phi\ell^{2}})^{2}.
\end{split}
\end{equation*}
Using Lemmas \ref{lemmaC0} and \ref{lemmaC1}, along with the boundedness of $\phi$, $\partial_{i}\phi$, $\partial_{u}\phi$, $\partial^{2}_{ii}\phi$, $\partial^{2}_{iu}\phi$ and $\partial^{2}_{uu}\phi$ on $\mathcal{C}_{\theta}\times I_{[0,T)}$, and the fact that $f$, $\ell$, $\ell_{i}$ and $\ell_{ii}$ are bounded functions of $\xi$, it follows that
\begin{equation}\label{lemmaC2-3eq9}
-C_{3}\leq P_{j}\leq C_{3},\quad P_{jj}\geq -C_{4}h_{jj}-C_{4},
\end{equation}
where $C_{3}$ is a positive constant depending on $n$, $\theta$, $\min_{C^{0}}f$, $\max_{C^{0}}f$, $||f||_{C^{1}(\mathcal{C}_{\theta})}$, $||h||_{C^{0}(\mathcal{C}_{\theta}\times[0,T))}$, $||h||_{C^{1}(\mathcal{C}_{\theta}\times[0,T))}$,  $||\phi||_{C^{0}(\mathcal{C}_{\theta}\times I_{[0,T)})}$ and $||\phi||_{C^{1}(\mathcal{C}_{\theta}\times I_{[0,T)})}$,
with $C_{4}$ is a positive constant depending on $n$, $\theta$, $\min_{C^{0}}f$, $\max_{C^{0}}f$, $||f||_{C^{1}(\mathcal{C}_{\theta})}$, $||f||_{C^{2}(\mathcal{C}_{\theta})}$, $||h||_{C^{0}(\mathcal{C}_{\theta}\times[0,T))}$, $||h||_{C^{1}(\mathcal{C}_{\theta}\times[0,T))}$, $||\phi||_{C^{0}(\mathcal{C}_{\theta}\times I_{[0,T)})}$, $||\phi||_{C^{1}(\mathcal{C}_{\theta}\times I_{[0,T)})}$, $||\phi||_{C^{2}(\mathcal{C}_{\theta}\times I_{[0,T)})}$.

Therefore, in view of \eqref{lemmaC2-3eq9}, Lemmas \ref{lemmaC0} and \ref{lemmaC1}, we obtain
\begin{equation}\label{lemmaC2-3eq10}
\begin{split}
\frac{\partial_{t}Q}{h-h_{t}}
\leq C_{5}(1+A)+(C_{5}-A)\sigma_{1}+(A|\nabla h|^{2}-\sigma_{1})\sum\limits_{i}b^{ii},
\end{split}
\end{equation}
where $C_{5}$ is a positive constant depending on $n$, $\theta$, $\min_{C^{0}}f$, $\max_{C^{0}}f$, $||h||_{C^{0}(\mathcal{C}_{\theta}\times[0,T))}$, $||h||_{C^{1}(\mathcal{C}_{\theta}\times[0,T))}$, $||\phi||_{C^{0}(\mathcal{C}_{\theta}\times I_{[0,T)})}$, $||\phi||_{C^{1}(\mathcal{C}_{\theta}\times I_{[0,T)})}$, $C_{3}$ and $C_{4}$.

Since $h-h_{t}$ is positive and uniformly bounded above, if $\max Q\rightarrow\infty$ (i.e. $\max\sigma_1\rightarrow\infty$), then for sufficiently large $A$, the right-hand side of inequality \eqref{lemmaC2-3eq10} would become strictly negative. This would imply that $Q$ could not tend to infinity, which contradicts to the assumption. Therefore $\sigma_{1}$ must remain uniformly bounded above by a positive constant, we can obtain the upper bound of the principal curvatures of $\Sigma_{t}$ directly, this completes the proof.
\end{proof}

\section{Existence of the solutions to the equation}  \label{S4}

In this section, we first complete the proof of Theorem \ref{maintherorem2}, which amounts to proving that the support function $h_{\infty}$ of $\Sigma_{\infty}$ satisfies the following equation:
\begin{equation}
\left\{
\begin{aligned}
&\phi\left(\xi,\frac{h}{\ell}\right)\det(h_{ij}+h\delta_{ij})=f,\qquad&&~\mathrm{in}~\mathcal{C}_{\theta},\\
&\\
&\nabla_{\mu}h=\cot\theta h, \qquad&&~\mathrm{on}~ \partial \mathcal{C}_{\theta}.\\
\end{aligned}
\right.
\end{equation}
To achieve this, we construct a monotone quantity.

Define
\begin{equation*}
\Phi(\xi,t)=\int^{t}_0\frac{1}{\phi\left(\xi,s\right)}ds.
\end{equation*}
For the flow \eqref{flow-2}, we define the functional
\begin{equation*}
J(\Sigma_{t})=\int_{\mathcal{C}_{\theta}}f(\xi)\Phi\left(\xi,\frac{h}{\ell}\right)\ell d\xi-V(\widehat{\Sigma_{t}}),
\end{equation*}
where $\ell=\sin^{2}\theta+\cos\theta\langle \xi,e\rangle$ and $V(\widehat{\Sigma_{t}})$ denote the volume of the convex body $\widehat{\Sigma_{t}}$.
Now, we show that the functional $J(\Sigma_{t})$ is non-increasing along the flow \eqref{flow-2}.

\begin{lemma}\label{mon-decrease}
The functional $J(\Sigma_{t})$ is non-increasing along the flow \eqref{flow-2}. That is
\begin{equation*}
\frac{d}{dt}J(\Sigma_{t})\leq 0,
\end{equation*}
and the equality holds if and only if $\Sigma_{t}$ satisfies \eqref{omeq-1}.
\end{lemma}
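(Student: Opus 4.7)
The plan is to differentiate $J(\Sigma_t)$ in time, pass the derivative under the integral, and rewrite the result as a manifestly non-positive expression whose zero set coincides with the PDE in \eqref{omeq-1}. Since $\ell = \ell(\xi)$ is time-independent and $\Phi(\xi,t) = \int_0^t 1/\phi(\xi,s)\,ds$, the chain rule gives
$$\partial_t \Phi\!\left(\xi, \frac{h}{\ell}\right) = \frac{1}{\phi(\xi, h/\ell)} \cdot \frac{h_t}{\ell},$$
so the first summand in $dJ/dt$ contributes $\int_{\mathcal{C}_\theta} f h_t/\phi \, d\xi$.

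Next I would compute $\frac{d}{dt}V(\widehat{\Sigma_t})$. The boundary of $\widehat{\Sigma_t}$ consists of $\Sigma_t$ together with the base $\widehat{\partial\Sigma_t}\subset\partial\mathbb{R}_+^{n+1}$; the latter always lies in the fixed hyperplane $\partial\mathbb{R}_+^{n+1}$, so its motion is tangential to its outward unit normal $e$ and contributes nothing. Hence $\frac{d}{dt} V(\widehat{\Sigma_t}) = \int_{\Sigma_t} \langle \partial_t X, \nu\rangle \, dA$. Using the flow \eqref{flow-1} together with the identity $\langle \tilde\nu, \nu\rangle = 1 + \cos\theta \langle\nu, e\rangle = \ell$, a direct dot product gives $\langle \partial_t X, \nu\rangle = -fhK/\phi + h = h_t$, consistent with Proposition \ref{prop-3}. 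Pulling back by the inverse capillary Gauss map, Proposition \ref{prop-1} identifies $dA$ with $\det(\nabla^2 h + hI)\, d\xi = K^{-1}\, d\xi$, so
$$\frac{d}{dt} V(\widehat{\Sigma_t}) = \int_{\mathcal{C}_\theta} \frac{h_t}{K}\, d\xi.$$

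Assembling both pieces yields
$$\frac{d}{dt} J(\Sigma_t) = \int_{\mathcal{C}_\theta} h_t\!\left(\frac{f}{\phi} - \frac{1}{K}\right) d\xi = \int_{\mathcal{C}_\theta} h_t \cdot \frac{fK - \phi}{\phi K}\, d\xi.$$
Substituting the explicit $h_t = h(\phi - fK)/\phi$ from \eqref{flow-2} turns the integrand into $-h(fK-\phi)^2/(\phi^2 K)$, which is pointwise non-positive since $h, \phi, K > 0$ along the flow by the estimates of Section \ref{S3}. Equality $dJ/dt = 0$ forces $fK = \phi$ on $\mathcal{C}_\theta$, i.e.\ $\phi \det(\nabla^2 h + hI) = f$; combined with the boundary condition $\nabla_\mu h = \cot\theta h$, which is preserved along \eqref{flow-2}, this is precisely \eqref{omeq-1}.

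I do not expect a serious obstacle: the argument is a short variational identity. The only subtleties are the vanishing of the wall contribution to $dV/dt$ and the Jacobian formula $dA = K^{-1}\,d\xi$ under the inverse capillary Gauss map; both are routine consequences of the capillary framework recalled in Section \ref{S2}.
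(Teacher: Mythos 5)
Your argument is correct and follows essentially the same route as the paper: differentiate $J$, combine the contributions from $\Phi$ and the volume term, and substitute the evolution equation for $h$ to expose the manifestly non-positive integrand $-\frac{h}{K}\left(\frac{fK}{\phi}-1\right)^{2}$. The only difference is that you re-derive the first variation of volume from the flow \eqref{flow-1} and the identity $\langle\tilde\nu,\nu\rangle=\ell$, whereas the paper simply cites \cite[Theorem 2.7]{WWX24} for $\frac{d}{dt}V(\widehat{\Sigma_{t}})=\int_{\mathcal{C}_{\theta}}\frac{\partial_{t}h}{K}\,d\xi$.
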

\begin{proof}
For $t>0$, by \cite[Theorem 2.7]{WWX24}, we obtain
\begin{equation*}
\frac{d}{dt}V(\widehat{\Sigma_{t}})=\int_{\mathcal{C}_{\theta}}\frac{\partial_{t}h}{K}d\xi.
\end{equation*}
Since
\begin{equation*}
\begin{split}
\frac{d}{dt}J(\Sigma_{t})=&\int_{\mathcal{C}_{\theta}}f(\xi)\frac{\partial_{t}h}{\phi\left(\xi,\frac{h}{\ell}\right)} d\xi-\int_{\mathcal{C}_{\theta}}\frac{\partial_{t}h}{K}d\xi\\
=&\int_{\mathcal{C}_{\theta}}\left(\frac{f(\xi)}{\phi\left(\xi,\frac{h}{\ell}\right)}-\frac{1}{K}\right)\frac{\partial h}{\partial t}d\xi\\
=&-\int_{\mathcal{C}_{\theta}}\frac{h}{K}\left(\frac{f(\xi)K}{\phi\left(\xi,\frac{h}{\ell}\right)}-1\right)^{2}d\xi\leq 0.
\end{split}
\end{equation*}
The equality holds if and only if
\begin{equation*}
\frac{f(\xi)K}{\phi\left(\xi,\frac{h}{\ell}\right)}=1,
\end{equation*}
which is just the first equation of \eqref{omeq-1}. The boundary condition follows naturally from the definition of the flow, this completes the proof.
\end{proof}

\begin{proof}[Proof of Theorem \ref{maintherorem2}]
In view of Lemmas \ref{lemmaC0}$\sim$\ref{lemmaC2-3}, we conclude that the solution $h$ of flow \eqref{flow-2} satisfies
\begin{equation*}
||h(\cdot,t)||_{C^{2}(\mathcal{C}_{\theta})}\leq C,
\end{equation*}
where the positive constant $C$ is independent of $t$. The theory of standard parabolic fully nonlinear equations with Neumann boundary conditions (see, e.g., \cite[Theorems 6.1, 6.4 and 6.5]{D88} and \cite[Theorem 14.23]{L96}) ensures that, for any
integer $k\geq2$ and $\alpha\in(0,1)$, the following holds
\begin{equation}\label{holder estimate}
||h(\cdot,t)||_{C^{k,\alpha}(\mathcal{C}_{\theta})}\leq C.
\end{equation}
Hence we obtain the long time existence and $C^{\infty}$-smoothness of solutions for the flow \eqref{flow-2}.
According to the definition of $J(\Sigma_{\tau})$, and recalling Lemmas \ref{lemmaC0}, \ref{lemmaC2-1}, \ref{lemmaC2-2}, \ref{lemmaC2-3} and \ref{mon-decrease}, it follows that
\begin{equation*}
|J(\Sigma_{0})-J(\Sigma_{\tau})|=|\int_{0}^{\tau}dt\int_{\mathcal{C}_{\theta}}\frac{h}{K}\left(\frac{f(\xi)K}{\phi\left(\xi,\frac{h}{\ell}\right)}-1\right)^{2}d\xi|<\infty.
\end{equation*}
By the monotonicity of $J(\Sigma_{t})$, for a number sequence $\{t_{j}\}_{j\geq1}$ with $t_{j}\rightarrow +\infty$, the limit $h(\xi,\infty):=\lim_{t_{j}\rightarrow\infty}h(\xi, t_{j})$ exists. Passing to the limit, we have
\begin{equation*}
\int_{\mathcal{C}_{\theta}}\frac{h_{\infty}}{K}\left(\frac{f(\xi)K}{\phi\left(\xi,\frac{h_{\infty}}{\ell}\right)}-1\right)^{2}d\xi=0,
\end{equation*}
where $h_{\infty}:=h(\xi,\infty)$ is the capillary support function of $\Sigma_{\infty}$. This implies that
\begin{equation*}
\frac{f(\xi)K}{\phi\left(\xi,\frac{h_{\infty}}{\ell}\right)}=1,
\end{equation*}
combing with the fact that $h_{\infty}$ satisfies the flow \eqref{flow-2}, which is just equation \eqref{omeq-1}. The proof of Theorem \ref{maintherorem2} is complete.
\end{proof}

\begin{proof}[Proof of Theorem \ref{maintherorem1} ]
The existence of a smooth, strictly convex solution to equation \eqref{omeq-1} is a direct consequence of Theorem \ref{maintherorem2}.

By Theorem \ref{maintherorem2}, starting from the smooth, strictly convex initial hypersurface $\Sigma_{0}$, the solution $h(\xi,t)$ to flow \eqref{flow-2} exists for all time and converges smoothly to a limit function $h_{\infty}(\xi)$ (as $t\rightarrow\infty$).
And the proof of Theorem \ref{maintherorem2} implies that this limit function $h_{\infty}$ satisfies
\begin{equation*}
f(\xi)K=\phi(\xi, \frac{h_\infty}{\ell}),
\end{equation*}
which is the first equation in \eqref{omeq-1}. Since $h_{\infty}$ satisfies the flow \eqref{flow-2}, the boundary condition is naturally satisfied. According to Theorem \ref{maintherorem2}, the strict convexity and smoothness of the solution  $h_{\infty}$ are preserved.

Therefore, $h_{\infty}$ is the desired smooth, strictly convex solution to \eqref{omeq-1}. This completes the proof of Theorem \ref{maintherorem1}.
\end{proof}

\vspace {1 cm}

\end{document}